\documentclass[
  12pt,
  a4paper,
  reqno,
  dvipsnames,
  svgnames,
  table
]{amsart}
\usepackage{amsmath,amssymb,amscd,amsthm,mathtools,mathrsfs}
\usepackage{dsfont}
\DeclareMathOperator{\Tor}{Tor}
 \newcommand{\KK}{\mathbb{K}}
  
\DeclareMathOperator{\Hom}{Hom}
\usepackage{bm}
\usepackage{graphicx}
\usepackage{tikz}
\usepackage{tikz-cd}
\usepackage{subcaption}
\usepackage{blkarray}
\usepackage{nicematrix}
\usepackage{booktabs}
\usepackage{url}
\usepackage{pifont}
\usepackage{xcolor}
\definecolor{GreenDef}{RGB}{27,132,5}
\definecolor{BlueDef}{RGB}{11,78,188}
\definecolor{RedDef}{RGB}{15,77,146}
\definecolor{Yblue}{RGB}{160,4,23}
\usepackage[top=2cm,bottom=2cm,left=1.9cm,right=1.9cm]{geometry}
\usepackage{titlesec}
\usepackage{titletoc}
\titleformat{\section}[hang]
{\scshape\small\color{RedDef}\filcenter}{\S\ \thesection.}{0.2em}{}
\titleformat{\subsection}[runin]
{\scshape\color{RedDef}}{\thesubsection.}{0.2em}{}[.]
\contentsmargin{2.55em}
\dottedcontents{section}[3.8em]{}{1.5em}{1pc}
\dottedcontents{subsection}[6.1em]{}{2.5em}{1pc}
\usepackage[breaklinks,colorlinks,pagebackref]{hyperref}
\definecolor{Red}{RGB}{160,4,23}
\definecolor{Green}{RGB}{27,132,5}
\definecolor{Blue}{RGB}{11,78,188}
\hypersetup{linkcolor=Red,citecolor=Green}
\usepackage[capitalise,nameinlink]{cleveref}
\crefformat{equation}{(#2#1#3)}
\usepackage[msc-links,lite,alphabetic]{amsrefs}
\usepackage{mathscinet}

\usepackage{ulem}
\def\MR#1{%
  \relax\ifhmode\unskip\spacefactor3000 \space\fi
  \href{http://www.ams.org/mathscinet-getitem?mr=#1}{MR#1}
}

\makeatletter
\renewcommand{\BibLabel}{%
  \Hy@raisedlink{\hyper@anchorstart{cite.\CurrentBib}\hyper@anchorend}%
  [\thebib]%
}
\makeatother
\theoremstyle{plain}
\newtheorem{theorem}{Theorem}[section]
\newtheorem{lemma}[theorem]{Lemma}
\newtheorem{proposition}[theorem]{Proposition}
\newtheorem{corollary}[theorem]{Corollary}

\theoremstyle{definition}
\newtheorem{definition}[theorem]{Definition}
\newtheorem{remark}[theorem]{Remark}
\newtheorem{example}[theorem]{Example}

\newcommand{\N}{\mathbb{N}}
\newcommand{\p}{\mathbb{P}}
\newcommand{\Z}{\mathbb{Z}}
\newcommand{\Q}{\mathbb{Q}}
\newcommand{\R}{\mathbb{R}}
\newcommand{\C}{\mathbb{C}}
\newcommand{\E}{\mathbb{E}}
\newcommand{\Aff}{\mathsf{Aff}}
\newcommand{\ASim}{\operatorname{Asim}}
\newcommand{\Sim}{\operatorname{Sim}}
 \newcommand{\GL}{\operatorname{GL}}
\newcommand{\Isom}{\operatorname{Isom}}
\newcommand{\lin}{\operatorname{lin}}
\newcommand{\supp}{\mathsf{supp}}

\newcommand{\Id}{\mathrm{Id}}

\title{Random free semigroups of affine groups}

\author[Richard Aoun]{Richard Aoun}
\address{University Gustave Eiffel, Champs-sur-Marne,
5 boulevard Descartes, 77420 Marne-la-Vall\'{e}e Cedex 2, France}
\email{richard.aoun@univ-eiffel.fr}
\subjclass[2020]{
Primary 60B15; 60B15; 60J05; 20M05}
\keywords{Free semigroups; affine group; random walks; local contraction}
\author[Keivan Mallahi-Karai]{Keivan Mallahi-Karai}
\address{Constructor University, School of Science,
Campus Ring~I, 28759 Bremen}
\email{kmallahikarai@constructor.university}

\begin{document}
\maketitle
\begin{abstract}
We investigate random freeness of semigroups in the solvable non-virtually nilpotent setting. We focus on a correlated affine model, namely semigroups generated by the two components of a finitely supported random walk
$L_n=(L_{n,1},L_{n,2})$ on $\Aff(K)^2$ whose two components share a common linear part.

In this model, we show that the long-term behavior of freeness is completely governed by an abelian shadow, namely the projected random walk on the multiplicative group $A\subset K^\times$ generated by the common multipliers. If this walk is transient, then {the semigroup} $\langle L_{n,1},L_{n,2}\rangle_+$ is eventually free almost surely. If it is recurrent, then freeness does not stabilize: the set of non-free times is almost surely infinite, yet has almost sure density zero. Moreover, the   obstruction to freeness depends solely on the common linear part and admits an explicit arithmetic description in terms of  roots of Littlewood polynomials.

The proof combines a local contraction theorem for affine random walks over arbitrary local fields, developed in the appendix and related to the theory of critical affine random walks, with a ping--pong argument played at a time-dependent place.
\end{abstract}

  \tableofcontents

\section{Introduction}
The Tits alternative and its probabilistic variants have attracted considerable attention in geometric group theory over the past two decades. In many classes of non-amenable groups --- linear groups, hyperbolic groups, mapping class groups --- random walks generically generate free subgroups and freeness stabilizes almost surely. The present paper initiates the study of random freeness of semigroups in solvable non-virtually nilpotent groups. 

\medskip
\noindent\textbf{Deterministic background.}
The Tits alternative~\cite{Tits1972} asserts that every finitely generated
linear group either contains a free group on two generators or is virtually
solvable. On the solvable side, while   no non-abelian free 
subgroups can exist, Rosenblatt~\cite{Rosenblatt1974} showed that any finitely 
generated solvable non-virtually-nilpotent group nonetheless contains a 
free semigroup on two generators--- in particular, such groups have 
exponential growth, recovering a theorem of Milnor and 
Wolf~\cite{Milnor1968, Wolf1968}. 
Chou~\cite{Chou}
extended this dichotomy to the broader class of elementary amenable groups. 
 
Subsequent works of Alperin \cite{alperin} and Osin \cite{osin, osin2}  showed that finitely generated solvable groups of exponential growth  satisfy the stronger property of uniform exponential growth. Later, Breuillard~\cite{Breuillard} introduced a new approach to this phenomenon based on affine dynamics and ping--pong. Building on structural results of Groves~\cite{Groves}, he showed that, after passing to suitable finite-index subgroups and quotients, the problem reduces to subgroups of the affine group $\Aff(K)$ for a suitable field $K$. Besides yielding a new proof and strengthening of earlier results, this approach sheds new light on the mechanism underlying Rosenblatt's theorem by locating the source of freeness in affine dynamics. Using a similar reduction, Cornulier and Tessera~\cite{CornulierTessera} strengthened Rosenblatt's conclusion by proving that the free semigroup can in fact be chosen quasi-isometrically embedded. As the affine group is the smallest non-virtually-nilpotent solvable group, it is a natural model for studying similar phenomena pertaining to this class of groups.

\medskip

\noindent\textbf{Probabilistic Tits alternatives.}
The question of whether free generated subgroups  arise generically 
from random walks was first addressed by Guivarc'h~\cite{Guivarch1990}, 
who showed that along a subsequence of times, two independent random 
walks on a non-virtually-solvable linear group generate a free subgroup. 
The stronger statement --- that freeness holds \textit{eventually}, almost 
surely --- was established for finitely generated non-virtually-solvable 
linear groups in~\cite{Aoun2011} and for non-elementary hyperbolic 
groups in~\cite{GMO}, and has since been extended to acylindrically 
hyperbolic groups~\cite{TaylorTiozzo}, quantified via concentration 
inequalities~\cite{AounSert}, and carried over to circle 
diffeomorphisms~\cite{GilabertVio2024}. In all these cases, eventual freeness is driven by contraction or drift mechanisms intrinsic to the ambient group. 

\medskip
\noindent\textbf{The affine model.}
The analogous question for free semigroups in the solvable setting has not been addressed to our knowledge. In the non-amenable situations discussed above, the arguments ultimately rely on contraction or drift phenomena that are available for every adapted measure. In the solvable world, no such universal mechanism exists. The first solvable example that already displays this difference is the affine group $\Aff(K)$ of a field:
\[ \Aff(K):=\{ x \mapsto ax+b: a \in K^\ast, b \in K \} \]
 Note that $\Aff(K)$ carries a natural abelian quotient through its linear part map
$
a:\Aff(K)\to K^\times$, 
and. For a given random walk on $ \Aff(K)$, the induced random walk on this quotient may exhibit very different behaviors depending on the measure. 

In this paper, we study a model of random walks on $\Aff(K) \times \Aff(K)$ (see below for the definition) in order to prove an analogous result pertaining to the freeness of the semigroup generated by the components of the walk. Our result (Theorem \ref{thm.correlated-refined}) shows that recurrence properties of the projected random walk on $K^\times$ govern the asymptotic freeness of this random semigroup. 

 In our study of random walks    on $\Aff(K)$ for a global field $K$, two regimes arise naturally. The \textit{non-centered} case, where at least one place is contracting on average, and the \textit{centered} one, where  
no place contracts on average. The first case  presents similarities with the non-amenable world, even though the dynamics takes place on a non-compact space. In the centered case, however, for each local field embedding of $K$ the projected walk is individually in the \textit{critical regime}, meaning that the drift of the log-modulus of the linear part vanishes. One phenomenon that comes to help is a synchronization phenomenon for affine trajectories on local fields, originating in the work of Babillot--Bougerol--\'Elie~\cite{babillot.bougerol.elie}. The interplay of this phenomenon across different places is the key to our method.  

\medskip

\noindent\textbf{Results and structure of the paper.}\label{result}
Throughout the paper, $K$ is a field of an arbitrary characteristic and $\mu(K)$ denotes its group of roots of unity. For a subset $E$ of a group, we denote by
$
\langle E\rangle_{\mathrm{grp}}$
the subgroup generated by $E$, and by
$
\langle E\rangle_+$ 
the semigroup generated by $E$.
For $\gamma:x\mapsto a x+b\in \Aff(K)$, we write $a(\gamma):=a\in K^\times$ and refer to it as the linear part of $\gamma$.  
In this paper we only consider measures $\eta$ on 
$\Aff(K)^2$ that are finitely supported and whose support is included in the subgroup of $\Aff(K)^2$ consisting of pairs $(g_1,g_2)$ with $a(g_1)=a(g_2)$.  
For $g=(g_1,g_2)\in\langle\supp(\eta)\rangle_{\mathrm{grp}}$,   the common linear part of $g_1$ and $g_2$ is denoted by
\[
a(g):=a(g_1)=a(g_2).
\]
We denote by $a_\ast \eta$ the pushforward of $\eta$ by   the map $a:\Aff(K)\to K^\times$: $(a_\ast \eta)(x)=\sum_{a(g)=x}{\eta(g)}$, $x\in K^\times$. The case of more general measures (including the notable example of product measures) is the subject of a future work. 

 The random walk driven by $\eta$ is the process
\[
L_n=X_n\cdots X_1,
\]
where the $X_i$ are independent $\Aff(K)^2$-valued random variables with
common law $\eta$. We write  $L_n=(L_{n,1},L_{n,2})$ with $L_{n,i}\in \Aff(K)$. 
Let
\[
A=\langle \supp(a_\ast \eta) \rangle_
{\mathrm{grp}} \subset  K^\times.
\]
We consider the random walk $(a(L_n))$ on the finitely generated abelian group
$
A
$
driven by the probability measure $a_\ast \eta$, started at the identity
element $1\in K^\times$. We say that the walk is recurrent if
\[
\mathbb P(a(L_n)=1\ \mathrm{i.o.})=1,
\]
and transient otherwise.

Finally, let us define the notion of a free semigroup in a group. 
Let $G$ be a group and let $\alpha,\beta \in G$. We say that $\alpha$ and $\beta$ generate a free semigroup if any two distinct non-empty words in the letters $\alpha$ and $\beta$ always define distinct elements of $G$.

In particular, this is easily seen to be equivalent to requiring that, for all $k,l \geq 1$ and all distinct tuples
$
(m_1,n_1,\ldots,m_k,n_k)
\quad\text{and}\quad
(m'_1,n'_1,\ldots,m'_l,n'_l)
$
of positive integers, one has
$$
\alpha^{m_1}\beta^{n_1}\cdots \alpha^{m_k}\beta^{n_k}
\neq
\alpha^{m'_1}\beta^{n'_1}\cdots \alpha^{m'_l}\beta^{n'_l}.
$$

\medskip 
 One of the main results of this paper is the following dichotomy.  
\begin{theorem}[Asymptotic freeness and transience in the abelian quotient]
\label{thm.correlated-refined}
Let $K$ be a field of any characteristic, and let 
$\eta$ be a finitely supported probability measure on  $\Aff(K)^2$, supported on pairs $(g_1,g_2)$ with a common linear part
$a(g_1)=a(g_2)$. Assume the following non-degeneracy assumptions hold 
\begin{enumerate}
\item  The common linear part is not almost surely a root of unity, i.e.
$
\eta\{g:\ a(g)\in\mu(K)\}<1$. 
 
\item The support of $\eta$ is not included in the diagonal subgroup of $\Aff(K)^2$. 
\end{enumerate}
Then the following dichotomy holds: 
\begin{enumerate}
\item[i.] If the projected random walk $(a(L_n))$ on $A$ is transient, then
$\langle L_{n,1},L_{n,2}\rangle_+$ is eventually free almost surely.

\item[ii.] If the projected random walk $(a(L_n))$ on $A$ is recurrent, then
$\langle L_{n,1},L_{n,2}\rangle_+$ is not free infinitely often almost surely,
but
\[
\frac1N\#\{1\le n\le N:\langle L_{n,1},L_{n,2}\rangle_+
\text{ is not free}\}\underset{N\to +\infty}{\longrightarrow} 0
\]
almost surely.
\end{enumerate}
\end{theorem}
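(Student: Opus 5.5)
Write $L_{n,1}(x)=a_nx+b_n$ and $L_{n,2}(x)=a_nx+c_n$, where $a_n=a(L_n)$. First reduce to a normal form. Since $\supp(\eta)$ is not diagonal, $\delta_n:=c_n-b_n$ is eventually nonzero almost surely; this should follow from a non-degeneracy argument on the difference cocycle $\delta_n=a(X_n)\delta_{n-1}+(c(X_n)-b(X_n))$. When $a_n\neq 1$, conjugate by the affine map sending the fixed point of $L_{n,1}$ to $0$ and rescaling $\delta_n$ to $1$. This replaces the pair by $\alpha=ax$, $\beta=ax+1$ with $a=a_n$, and conjugation does not change freeness. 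A word $\alpha^{m_1}\beta^{n_1}\cdots$ then has linear part $a^{\text{length}}$ and translation part $\sum_{j}\epsilon_j a^{j}$ with $\epsilon_j\in\{0,1\}$. Hence a relation between two distinct words of equal length is equivalent to $a$ being a root of a nonzero polynomial with coefficients in $\{-1,0,1\}$. This is the Littlewood-type obstruction: $\langle\alpha,\beta\rangle_+$ is free if and only if $a$ is not such a root. Words of different lengths reduce to the equal-length case, because two words in a free relation can be padded by multiplying on the right.

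\textbf{Recurrent case.} If $a_n=1$, the two maps are commuting translations $x+b_n$ and $x+c_n$, so $\alpha\beta=\beta\alpha$ and the semigroup is not free. Recurrence gives infinitely many such $n$ almost surely. For the density-zero claim, note the following. If $a$ is a root of a $\{-1,0,1\}$-polynomial and $a$ is not a root of unity, then at some place $v$ one has $|a|_v\neq 1$. A nonzero $\{-1,0,1\}$-polynomial vanishing at $a$ then forces $|a|_v$ to lie within a bounded window determined by the height: archimedean $|a|_v\in(1/2,2)$, while non-archimedean places force $|a|_v=1$ for any Littlewood root (by the leading/trailing-term argument). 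So the non-free times $n$ are those with $a_n$ in a set $S$ of elements whose $\log|\cdot|_v$ is bounded at every place. Because $A$ is finitely generated, after factoring out the torsion and the roots of unity, $S$ is finite modulo a finite group. This holds since elements of bounded height in a finitely generated group form a finite set, by Northcott/Kronecker when $K$ is a global field; for general $K$, specialize or use that the absolute values separate $A$ modulo torsion. In characteristic $p$ there is extra work, since then $a$ is algebraic over $\mathbb{F}_p(t)$ and one uses valuations. So non-freeness requires $a_n$ to lie in a fixed finite set $F\subset A$. For a recurrent random walk on the finitely generated abelian group $A$ that is not supported on the torsion (assumption (1)), the walk is recurrent of rank at most $2$ and $\frac1N\#\{n\le N: a_n\in F\}\to0$ almost surely. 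This follows from the local limit theorem ($P(a_n=g)\to0$), combined with a second moment or ratio ergodic argument that upgrades convergence in expectation to almost sure convergence, e.g.\ Chacon--Ornstein with null recurrence. We also need $\delta_n\neq 0$ on a density-one set, which holds similarly.

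\textbf{Transient case.} We need $a_n\notin F$ eventually, together with $\delta_n\ne0$ eventually. For a transient walk each finite set is visited finitely often almost surely, which settles the first condition. The main obstacle is the torsion and degenerate subtleties. If the walk leaves every finite set but $a_n$ is not algebraic, freeness holds automatically. If $K$ is not a global field, I would reduce to a finitely generated subfield containing all supports and specialize to a global field, preserving the multiplicative structure of $A$. The delicate point is that the set $F$ must be shown to be finite intrinsically within $A$, including for transcendental parameters. Here I would try the appendix's local contraction theorem at a time-dependent place $v_n$ where $|a_n|_{v_n}$ is large, which lets us play ping-pong directly for $\alpha,\beta$ at that place. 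Transience supplies such a place with $|a_n|_{v_n}^{\pm1}>2$ eventually; this is the step I expect to be hardest. The local contraction theorem is what guarantees that the conjugated translation part is well positioned relative to the attracting disc.
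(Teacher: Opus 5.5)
\textbf{Gap: the separation step is asserted, not proved.} Your normal form $\alpha=ax$, $\beta=ax+1$, the exact Littlewood criterion, the finiteness of the Littlewood roots lying in $A$, and the transience/null-recurrence counting are essentially sound. Together they would make the moving ping--pong unnecessary. The gap is the step you treat as routine: that the difference process $\delta_n=b(L_{n,2})-b(L_{n,1})$ is nonzero at the relevant times. You only say this ``should follow from a non-degeneracy argument on the difference cocycle'', and later that it ``holds similarly'' on a density-one set. But this is the main probabilistic content of the theorem (Proposition~\ref{prop.eventual-separation}), and non-diagonality of $\supp\eta$ does not give it.

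As stated, your claim is even false. Suppose the affine action $g\cdot x=a(g)x+b(g_2)-b(g_1)$ of $G_\eta$ on $K$ has a common fixed point $x_0\neq 0$, e.g.\ $g_2=\tau g_1\tau^{-1}$ with $\tau$ the translation by $x_0$. Then $\delta_n=L_n\cdot 0=x_0(1-a_n)$ vanishes whenever $a_n=1$, hence infinitely often in the recurrent regime. The correct target is: almost surely, eventually $a_n\neq 1\Rightarrow \delta_n\neq 0$. Now $(\delta_n)$ is an affine random walk on the countable orbit $G_\eta\cdot 0$. When $a_*\eta$ is centered, its multiplier walk is critical at every place. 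So there is no soft reason for it to stop hitting $0$.

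\textbf{What is needed.} First specialize to a global field, with $\theta$ injective on $A$ and $\theta(b_1-b_2)\neq 0$. Then split into three cases.

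\emph{Common fixed point.} Use the formula $\delta_n=x_0(1-a_n)$ above.

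\emph{Non-centered.} Take a place $v$ with $\E\log|a(X_1)|_v>0$. At $v$, $L_n^{-1}\cdot 0$ converges a.s.\ in $K_v$, and the law of the limit is the stationary measure of the contracting inverse recursion. This law is non-atomic because there is no common fixed point (a common fixed point in $K_v$ automatically lies in $K$). Hence $\delta_n\neq 0$ eventually.

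\emph{Centered.} Apply the local contraction theorem (Theorem~\ref{thm.local-contraction}) to the quotient action at \emph{every} $v\in S(A)$, with starting point $0$ and compact set $\{0\}$. This gives $|a_n|_v\mathbf 1_{\{\delta_n=0\}}\to 0$ for all $v\in S(A)$ simultaneously. Infinitely many returns $\delta_{n_k}=0$ would then force $\prod_v|a_{n_k}|_v\to 0$, contradicting the product formula.

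This is the actual role of the appendix theorem. It has nothing to do with positioning the translation part in a ping--pong at a moving place. Given separation, that ping--pong is elementary, and in your Littlewood route it is not needed at all. So the step you flag as hardest is superfluous, while the step you treat as routine is the missing one.

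\textbf{Two smaller points.} Relations between words of different lengths are not reduced to equal length by padding. They force $a^k=a^l$, i.e.\ $a\in\mu(K)$, which is a root of $X^m-1$, so your criterion survives. In characteristic $p$ no valuation argument is needed: a Littlewood root is algebraic over $\mathbf F_p$, hence a root of unity.
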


\begin{remark}
If condition (1) fails then, since $\mu(K)$ is a group,
$
a(g)\in\mu(K)$
for every
$
g=(g_1,g_2)\in\langle\supp(\eta)\rangle_{\mathrm{grp}}$. 
Hence, for every such $g$, there exists $m\in\N$ such that
$g_1^m$ and $g_2^m$ are translations, and therefore commute.
Thus $\langle g_1,g_2\rangle_+$ is never free. In particular,
$\langle L_{n,1},L_{n,2}\rangle_+$ is never free almost surely. A similar remark holds if condition (2) fails. 
\end{remark}
 
We will say that the probability measure $a_\ast \eta$  is centered if
\[\forall \chi\in \Hom(A,\R), \quad
\int \chi(t)\,d(a_*\eta)(t)=0.
\]
Equivalently, since every
homomorphism $A\to\R$ vanishes on the torsion group $\Tor(A)$, after identifying
$A/\Tor(A)$ with $\Z^r$, the induced probability measure on $\Z^r$
has zero mean vector. In particular, if $a_*\eta$ is symmetric, then it is
centered. 
\begin{corollary}[Rank/drift criterion]
\label{cor.intro-criterion}
Under the assumptions of Theorem \ref{thm.correlated-refined}, if $r$ denotes the  free  rank of  the finitely generated group $A$, then:
\begin{enumerate}
\item If $r\ge 3$, then almost surely there exists $n_0\in \N$ such that for every $n\geq n_0$, 
$\langle L_{n,1},L_{n,2}\rangle_+$ is free.
\item If $r\in\{1,2\}$, then 
$\langle L_{n,1}, L_{n,2}\rangle_+$ is a.s.~eventually free if and only if $a_\ast\eta$ is
not centered.
\end{enumerate}
\end{corollary}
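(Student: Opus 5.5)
The plan is to deduce the corollary directly from Theorem~\ref{thm.correlated-refined}: it suffices to decide recurrence or transience of the projected walk $(a(L_n))$ on $A$ in terms of $r$ and of the centering of $a_\ast\eta$. The theorem gives eventual freeness a.s.\ exactly in the transient case. In the recurrent case the set of non-free times is a.s.\ infinite, so eventual freeness fails a.s. Thus the statement reduces to a classical fact about random walks on the finitely generated abelian group $A\cong \Z^r\times \Tor(A)$. That fact is: the walk driven by $a_\ast\eta$ (finitely supported, hence with all moments) is recurrent if and only if $r\le 2$ and $a_\ast\eta$ is centered.

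First I reduce to $\Z^r$. Let $\pi:A\to A/\Tor(A)\cong\Z^r$ and $\nu=\pi_\ast(a_\ast\eta)$. Since $A$ is generated by $\supp(a_\ast\eta)$, the walk is irreducible on $A$ as a group, and $\pi_\ast$ of it is a walk on $\Z^r$ whose support generates $\Z^r$. Recurrence of the walk on $A$ (returns to $1$ i.o.) implies recurrence of the projected walk on $\Z^r$. Conversely, assume the projected walk returns to $0$ i.o. Each such return visits the finite fibre $\Tor(A)$. By the standard recurrence/transience dichotomy for random walks on countable groups (Chung--Fuchs type: recurrence is equivalent to $\sum_n\mathbb P(a(L_n)=1)=\infty$, and the set of recurrent values forms a subgroup), visiting a finite set infinitely often forces recurrence. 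Hence recurrence on $A$ is equivalent to recurrence of $\nu$ on $\Z^r$. Here I must be careful that ``recurrent'' in the paper means returning to $1$ i.o.\ with probability one. The group generated by the support is all of $A$, but the semigroup may not be. Still, for abelian groups a walk that visits a finite set infinitely often is recurrent in the group-theoretic sense, and then every state reachable from $1$ communicates with it; this handles the point.

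Next I apply the classical criteria on $\Z^r$. If $\nu$ has nonzero mean $m$, the strong law of large numbers gives $|\pi(a(L_n))|\sim n|m|\to\infty$, so the walk is transient for every $r\ge1$. If $\nu$ is centered and $r\in\{1,2\}$, finite support gives a finite second moment, so the Chung--Fuchs theorem gives recurrence. The support generating $\Z^r$ ensures the walk is genuinely $r$-dimensional; note also $r\ge1$ by condition (1), since $a_\ast\eta$ is not supported on $\Tor(A)=\mu(K)\cap A$. If $r\ge3$, any random walk whose support generates a subgroup of rank $\ge3$ is transient (Chung--Fuchs / Spitzer). This gives (1): for $r\ge3$ the walk is always transient, so Theorem~\ref{thm.correlated-refined}(i) yields eventual freeness a.s. It also gives (2): for $r\in\{1,2\}$ the walk is transient iff $a_\ast\eta$ is not centered, and centeredness of $a_\ast\eta$ is by definition equivalent to $\nu$ having zero mean.

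The main subtlety is the torsion reduction and the exact meaning of recurrence, that is, passing between ``returns to $1$'' on $A$ and ``returns to $0$'' on $\Z^r$ when the support only generates $A$ as a group. I would handle it via the fibre argument above together with the $0$--$1$ law showing that $\mathbb P(a(L_n)=1\text{ i.o.})\in\{0,1\}$.
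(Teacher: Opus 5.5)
Your proposal is correct and follows the paper's route: you read off from Theorem~\ref{thm.correlated-refined} that eventual freeness holds a.s.\ exactly when $(a(L_n))$ is transient, and you then classify transience by $r$ and by centeredness, which is the content of Proposition~\ref{prop.rw-abelian}. The only difference is that you re-derive that proposition inline, and pass between returns to $1$ in $A$ and returns to $0$ in $\Z^r$ by translation-invariance of return probabilities on a group (in the transient case every finite set is visited only finitely often) instead of the irreducibility supplied by Lemma~\ref{lem.centered-semigroup-group}; this is equally valid.
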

An interesting feature of the corollary is that eventual freeness may still hold in the centered regime, where no place is contracting on average. In particular, when the rank of $A$ is $\geq 3$,  eventual freeness holds for any adapted measure, so that measure dependence is  a  low rank phenomenon.

\medskip
  \vspace{3mm}
\noindent\textit{Scheme of the proof.}
The proof has two main ingredients and a sharpening.

\medskip
\noindent
\noindent\textbf{Probabilistic part: eventual separation of fixed points.}
The key probabilistic step is to show that, almost surely, the fixed
points of $L_{n,1}$ and $L_{n,2}$ are eventually distinct whenever
$a(L_n)\neq 1$ (Proposition~\ref{prop.eventual-separation}). The
proof reduces, via a specialization argument
(Lemma~\ref{lemma.specialization}), to the case where $K$ is a
global field. The problem is a return to a subgroup problem, here the diagonal subgroup. 
We identify the dynamics of the random walk on $\Aff(K)^2$
modulo the diagonal subgroup with a one-dimensional affine recursion on $K$, thereby reducing the problem to that of showing that this process visits the origin only finitely often.
In the non-centered case, one place is contracting on average and
the affine recursion in the corresponding local field embedding has a
unique non-atomic stationary measure, ruling out an infinite number of returns to the origin.
The centered case is the heart of the
difficulty as  no place provides global contraction; and  therefore, by a theorem of Bougerol--Picard \cite{bougerol-picard}, no stationary probability measure exist for any local field embedding. The key input is a \textit{local contraction/synchronization
principle} for affine random walks  over local fields.  Roughly speaking this property says that  whenever one trajectory return to a given compact set, all trajectories asymptotically merge. This phenomenon
 was first established in the work of Babillot--Bougerol--\'Elie~\cite{babillot.bougerol.elie}
in the context of  affine random walks on the real line.   In \cite{brofferio-how},  Brofferio gave a new proof  of this result and, in \cite{brofferio.thesis, brofferio.renewal}, she proved that the same phenomenon happens for random walks on parabolic subgroups of the automorphism group of a homogeneous tree, a framework initiated by Cartwright-Kaimanovitch-Woess~\cite{CKW}. This applies, in particular,  for affine random walks on non-Archimedean local fields. 
 We develop in the appendix a unified version covering all local fields  
and extending them to the natural generality of  similarity transformations. Applied here, it forces $|a(L_n)|_v\to 0$
simultaneously at all relevant places on the event of infinitely many
returns, contradicting the product formula  
and completing the proof.

\medskip 
\noindent\textbf{Dynamical part: a moving ping--pong table.} 
Once fixed points are separated, freeness is established via a ping--pong argument of a new kind. In classical settings, a single place provides uniform contraction and the ping--pong table is fixed. Here, especially in the centered case, no place plays this role uniformly. Instead, whenever the common multiplier is sufficiently far from the identity in the logarithmic embedding of $A$ one can find a place, thanks to the product formula,   where it is strongly contracting. The relevant place depends on the position of the walk and therefore changes with time --- hence the name \textit{moving ping--pong table}.
This construction shows that non-freeness can only occur when the multiplier remains in a bounded arithmetic region. Dirichlet's $S$-unit theorem implies that this region intersects $A$ in a finite set. In the transient regime, the projected walk eventually leaves this set forever, yielding eventual freeness. In the recurrent regime, the walk returns to the identity infinitely often, producing infinitely many non-free times, but visits the obstruction set with density zero. This yields density-one freeness.

\medskip
\noindent\textbf{Arithmetic sharpening: Littlewood polynomials.}
The dynamical argument above identifies the obstruction to freeness as a finite subset of $A$.
A careful analysis reveals that the exact obstruction admits an explicit arithmetic description in terms of roots of unity and roots of Littlewood polynomials (Remark~\ref{rem.littlewood.sharp}), giving a sharp characterization of non-free times that goes beyond what the dynamical argument detects.

 \section{Arithmetic and probabilistic preliminaries}
\label{sec.prelim}
  
\subsection{Random walks on finitely generated abelian groups}\label{sec.rw-abelian}
In this section, we study recurrence and transience properties of
random walks on finitely generated abelian groups and relate them to
centeredness (Proposition \ref{prop.rw-abelian}). 

Let $B$ be an abelian group and let $\zeta$ be a finitely supported probability
measure on $B$. Set
\[
A:=\langle\supp(\zeta)\rangle_{\mathrm{grp}},
\qquad
A^+:=\langle\supp(\zeta)\rangle_+ .
\]
Thus $A$ is a finitely generated abelian group. We consider the random walk
\[
W_n=X_1\cdots X_n,\qquad W_0=e,
\]
where the $X_i$ are independent $B$-valued random variables with common law
$\zeta$. 
 
The random walk
$(W_n)$
defines a Markov chain on the countable  state space $A$ with transition kernel
$
P(g,h)=\zeta(g^{-1}h)$. The accessible states from $e$ are precisely the elements of $A^+$.  More generally, the states   accessible from
 $x\in A$ are 
 $xA^+$.  
  \begin{definition}
We say that the random walk $(W_n)$ is \textit{recurrent} 
\[
\mathbb P(W_n=e \ \mathrm{i.o.})=1.
\]
Otherwise, we say that the walk is \textit{transient}.
\end{definition}


The recurrence criterion below is governed by the following notion of
centeredness.  \begin{definition}\label{def.center}
We say that $\zeta$ is centered if
\[
\int \chi(a)\,d\zeta(a)=0
\]
for every group homomorphism $\chi:A\to\R$.
\end{definition}

\begin{lemma}\label{lem.centered-zr}
The measure $\zeta$ is centered if and only if, after identifying
$A/\Tor(A)$ with $\Z^r$, the induced probability measure on $\Z^r$
has zero mean vector.
\end{lemma}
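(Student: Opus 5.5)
The plan is to show that both conditions say the same thing, because every homomorphism $\chi:A\to\R$ factors through the free quotient $A/\Tor(A)\cong\Z^r$. First, $\R$ is torsion-free, so any such $\chi$ vanishes on $\Tor(A)$: if $t^m=e$ then $m\chi(t)=0$, hence $\chi(t)=0$. Thus $\chi$ factors uniquely as $\chi=\bar\chi\circ\pi$, where $\pi:A\to A/\Tor(A)$ is the projection. Conversely, every homomorphism $\bar\chi:A/\Tor(A)\to\R$ pulls back to a homomorphism $\bar\chi\circ\pi$ on $A$. So $\chi\mapsto\bar\chi$ is a bijection between $\Hom(A,\R)$ and $\Hom(A/\Tor(A),\R)$.

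Next, fix an isomorphism $A/\Tor(A)\cong\Z^r$; this exists because $A$ is finitely generated, so its torsion-free quotient is free of finite rank. Homomorphisms $\Z^r\to\R$ are exactly the linear forms $x\mapsto\langle c,x\rangle$ with $c\in\R^r$. Write $\bar\zeta=\pi_*\zeta$ for the induced measure on $\Z^r$. It is finitely supported, so its mean vector $m=\int x\,d\bar\zeta(x)\in\R^r$ is well defined. By change of variables and linearity of the integral,
\[
\int\chi\,d\zeta=\int\bar\chi\,d\bar\zeta=\langle c,m\rangle .
\]

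It remains to compare the two conditions. The measure $\zeta$ is centered if and only if $\langle c,m\rangle=0$ for every $c\in\R^r$. Taking $c$ to run over the standard basis vectors $e_1,\dots,e_r$ shows this holds if and only if $m=0$. The only points needing any care are that the characters vanish on torsion and that the factorization is bijective; there is no real obstacle.
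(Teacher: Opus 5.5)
Your proof is correct and follows the same route as the paper. Homomorphisms into the torsion-free group $\R$ kill $\Tor(A)$, so $\Hom(A,\R)\simeq\Hom(\Z^r,\R)\simeq(\R^r)^\ast$, and vanishing of every linear form on the mean vector is equivalent to the mean vector being zero. You additionally spell out the change-of-variables identity $\int\chi\,d\zeta=\langle c,m\rangle$, which the paper leaves implicit.
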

\begin{proof}
Since $\R$ is torsion-free, every homomorphism $\chi:A\to\R$
vanishes on $\Tor(A)$ and therefore factors through
\[
A/\Tor(A)\simeq\Z^r.
\]
Thus
\[
\Hom(A,\R)\simeq \Hom(\Z^r,\R)\simeq (\R^r)^\ast.
\]
Hence centeredness is equivalent to vanishing of the mean vector against
every linear form on $\R^r$, i.e. to the mean vector being zero.
\end{proof}
We start with the following observation. 
\begin{lemma}\label{lem.centered-semigroup-group}
If $\zeta$ is centered, then
\[
A^+=A.
\]
In particular,  the Markov chain  $(W_n)$ is irreducible on
$A$. 
 \end{lemma}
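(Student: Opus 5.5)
Since $A^+$ is a subsemigroup of $A$ containing $\supp(\zeta)$, it suffices to show that $A^+$ is a subgroup. For that, it is enough to show $s^{-1}\in A^+$ for every $s\in\supp(\zeta)$, because $A$ is generated as a group by $\supp(\zeta)$. The strategy is to show that the convex cone spanned by the images of the support in $\R^r$ is a linear subspace, and then to handle the torsion separately.

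\textbf{Reduction to the real picture.} Let $\pi:A\to A/\Tor(A)\simeq\Z^r\subset\R^r$, and write $v_s=\pi(s)$. By Lemma~\ref{lem.centered-zr}, $\sum_s \zeta(s)v_s=0$ with all weights $\zeta(s)>0$. This means $0$ lies in the relative interior of the convex hull of the $v_s$. Hence each $-v_s$ is a nonnegative combination of the $v_t$: indeed $-\zeta(s)v_s=\sum_{t\ne s}\zeta(t)v_t$.

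\textbf{Passing to integer coefficients.} The coefficients above are real, but we need a nonnegative \emph{integer} relation. The set
\[
C=\Big\{c\in\R_{\ge0}^{\supp\zeta}:\ \sum_t c_t v_t=0\Big\}
\]
is a rational polyhedral cone, since the $v_t$ are integer vectors. It contains the point $(\zeta(t))_t$, which has all coordinates strictly positive. Rational points are dense in a rational polyhedral cone, so $C$ contains a rational point with all coordinates positive. Clearing denominators gives positive integers $n_t\ge1$ with $\sum_t n_t v_t=0$.

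\textbf{Handling torsion.} Translating back to $A$, the element $g=\prod_t t^{n_t}$ lies in $\Tor(A)$, so $g^m=e$ for some $m\ge1$. Now fix $s\in\supp(\zeta)$. Then
\[
s^{-1}=s^{mn_s-1}\prod_{t\ne s}t^{mn_t}.
\]
Every exponent here is $\ge0$, and the total word is nonempty (for instance $mn_s-1\ge0$, and if $\supp(\zeta)=\{s\}$ with $mn_s=1$ then $s=e$, so $s^{-1}=s\in A^+$). Hence $s^{-1}\in A^+$, so $A^+=A$.

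\textbf{Irreducibility.} The states accessible from $x$ are $xA^+=xA=A$, so every state communicates with every other and the chain is irreducible on $A$.

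The main obstacle is the passage from real to integer coefficients, which rests on the rationality of the cone $C$. Alternatively, one can argue by a supporting hyperplane: if $-v_s$ were not in the cone generated by the $v_t$, separation would give a $\chi\in(\R^r)^\ast$ with $\chi(v_t)\ge0$ for all $t$ and $\chi(v_s)>0$. This contradicts centeredness. Integrality then again follows from Gordan's lemma, using that $\mathrm{cone}(v_t)$ is rational.
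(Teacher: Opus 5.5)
Your proof is correct and follows the paper's argument essentially step for step: a strictly positive real relation $\sum_s \zeta(s)v_s=0$, rational approximation inside the rational solution space to get positive integers $n_s$, killing the torsion element $\prod_t t^{n_t}$ by an $m$-th power, and rearranging to write $s^{-1}$ as a positive word. Taking the weights to be the $\zeta(s)$ themselves and checking the one-point-support case are small gains in precision over the paper; only your closing aside is loose, because Gordan's lemma alone does not put $-v_s$ in the monoid generated by the $v_t$ (you would again need a rational relation and the same rearrangement trick), but your main proof does not rely on that aside.
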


\begin{proof}
Let
$
S=\supp(\zeta)
$
and write the group law additively in
$
A_0:=A/\Tor(A)\simeq \mathbb Z^r$. 
Since $\zeta$ is centered, the origin belongs to the convex hull of the image
$\overline S$ of $S$ in $A_0$. Hence there exist real numbers
$(p_s)_{s\in S}$ such that
\[
\sum_{s\in S} p_s\,\overline s =0,
\qquad
p_s>0,
\qquad
\sum_{s\in S}p_s=1.
\]
Since the above equations form a linear system with rational coefficients, its solution set is a   subspace of $\R^{|S|}$ defined over $\Q$. Its rational points form a dense subset of this subspace. Since $(p_s)_{s\in S}$ is a solution in the positive orthant of $\R^{|S|}$, we may find a rational solution $(q_s)_{s\in S}$ with $q_s>0$ for every $s\in S$.  Multiplying by a common denominator, we obtain positive
integers $(n_s)_{s\in S}$ such that
\[
\sum_{s\in S} n_s\,\overline s =0
\quad\text{in } A_0.
\]
Equivalently, in multiplicative notation in $A$,
\[
t:=\prod_{s\in S}s^{n_s}\in\Tor(A).
\]
Let $m\geq1$ be such that $t^m=e$. Then
\[
\prod_{s\in S}s^{mn_s}=e.
\]
Fix $s_0\in S$. Rearranging this relation gives
\[
s_0^{-1}
=
s_0^{mn_{s_0}-1}
\prod_{\substack{s\in S\\ s\neq s_0}}s^{mn_s}.
\]
The right-hand side belongs to $A^+$, hence
$s_0^{-1}\in A^+$.
Since this holds for every $s_0\in S$, the semigroup $A^+$ contains the
inverse of each of its generators. Therefore
$
A\subset A^+$. 
Since the reverse inclusion is immediate, we conclude that
$
A^+=A$.  This implies the irreducibility of the Markov chain on $A$, since for every $x\in A$, $xA^+=xA=A$. 
 \end{proof}

We can therefore apply classical recurrence  criteria for
irreducible random walks on $\Z^r$.
\begin{proposition}\label{prop.rw-abelian}
Let $r$ be the free   rank of $A$. Then 
\begin{enumerate}
\item[(i)]
If $\zeta$ is not centered, then $(W_n)$ is transient. 

\item[(ii)]
If $\zeta$ is centered, then $(W_n)$ is recurrent if and only if $r\leq2$.
\end{enumerate}
Moreover, in the transient regime, for any $x\in A$ and for any $F\subset A$ finite, $\p_x(W_n\in F\,\textrm{i.o.})=0$. 
\end{proposition}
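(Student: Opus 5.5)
The plan is to reduce everything to the classical theory of random walks on $\Z^r$ by passing to the quotient $\pi:A\to A_0:=A/\Tor(A)\simeq\Z^r$, whose kernel is finite. The image walk $\overline W_n=\pi(W_n)$ is a random walk on $\Z^r$ driven by $\pi_*\zeta$, and $\pi_*\zeta$ is finitely supported, hence has moments of all orders. The basic observation is that $W_n=e$ forces $\overline W_n=0$. Conversely, $\overline W_n=0$ means $W_n\in\Tor(A)$, a finite set. Recurrence or transience of $(W_n)$ should therefore be read off from that of $(\overline W_n)$, together with an irreducibility argument on the finite fibres.

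\emph{Non-centered case (i).} If $\zeta$ is not centered, Lemma \ref{lem.centered-zr} gives a nonzero mean vector $m\in\R^r$ for $\pi_*\zeta$. I will pick a homomorphism $\chi:A\to\R$ with $\int\chi\,d\zeta=\langle \ell,m\rangle>0$. The strong law of large numbers then gives $\chi(W_n)/n\to \langle\ell,m\rangle>0$ almost surely, so $\chi(W_n)\to+\infty$. Any finite set $F\subset A$ has bounded image under $\chi$, so $W_n\in F$ only finitely often almost surely, under every $\p_x$. This proves transience, and it also proves the final assertion in the non-centered transient regime.

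\emph{Centered case (ii).} By Lemma \ref{lem.centered-semigroup-group}, $(W_n)$ is an irreducible Markov chain on $A$. Consequently recurrence of $e$ is a class property, and it is equivalent to $\sum_n\p(W_n=e)=\infty$. The classical Chung--Fuchs / Pólya theory applies to centered walks on $\Z^r$ with finite second moment:
\begin{itemize}
\item for $r\le 2$ the walk returns to $0$ infinitely often;
\item for $r\ge3$ it visits every bounded set finitely often, via the local limit theorem bound $\p(\overline W_n=0)\lesssim n^{-r/2}$ on the sublattice it generates, or via Spitzer's criterion.
\end{itemize}
The case $r=0$ is handled directly: $A$ is finite and the irreducible chain is then trivially recurrent.

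For $r\ge3$, summability of $\p(\overline W_n=0)$ gives $\sum_n\p(W_n=e)<\infty$, so $(W_n)$ is transient. Borel--Cantelli on $\pi(F)$ also gives the final claim. For $r\in\{1,2\}$, $\overline W_n=0$ holds infinitely often almost surely. I then have to pass from $\pi(W_n)=0$ i.o.\ to $W_n=e$ i.o., and I expect this lifting through the finite torsion fibre to be the main technical point. The route is to note that each time $\overline W_n=0$, the walk sits in the finite set $\Tor(A)$. Hence some $t\in\Tor(A)$ is visited infinitely often with positive probability, which makes $t$ a recurrent state of the chain. Irreducibility on $A$ makes every state recurrent, in particular $e$, and then $\p(W_n=e\text{ i.o.})=1$ by the standard dichotomy for irreducible recurrent chains.

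\emph{Final assertion.} Transience of an irreducible chain (centered case) means every state is visited finitely often almost surely from any start. Taking a finite union over $F$ yields $\p_x(W_n\in F\text{ i.o.})=0$. In the non-centered case, where irreducibility may fail, this was already obtained from the drift argument above.
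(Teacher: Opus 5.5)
Your proposal is correct and follows essentially the same route as the paper. You pass to $A/\Tor(A)\simeq\Z^r$ and use the law of large numbers in the non-centered case. In the centered case you use irreducibility from Lemma~\ref{lem.centered-semigroup-group} together with the classical Chung--Fuchs/P\'olya criteria, and you lift returns to the finite fibre $\Tor(A)$ back to returns to $e$ via irreducibility. Your write-up is only slightly more explicit in places: you use a single homomorphism $\chi$ for the drift, a local limit bound for $r\ge 3$, and you justify why a state visited infinitely often with positive probability must be recurrent.
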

\begin{proof}
 Write $$
A\simeq \mathbb Z^r\times T,
$$
where $T$ is finite.

Assume first that $\zeta$ is not centered. Then the induced walk $\overline{W_n}$ on
$A/\Tor(A)\simeq\mathbb Z^r$ has nonzero mean.   By the law
of large numbers,
$
\frac{\overline{W_n}}{n}\to m\neq0
$
in $\mathbb R^r$, hence $(\overline{W_n})$ escapes to infinity. Since $W_n=e$ implies  $\overline{W}_n=0$, we deduce the transience of $(W_n)$.  

Assume now that $\zeta$ is centered. By
Lemma~\ref{lem.centered-semigroup-group},
$A^+=A$, 
and the walk $W_n$ is irreducible on $A$ and {\it a fortiori} the projected random walk $\overline{W_n}$ is irreducible on  $\Z^r$. 
Hence, using classical results 
(see \cite[Chapter~3]{woess} or \cite[Chapter~2]{spitzer}), we deduce that $\overline{W}_n$ is recurrent if and only if $r\leq 2$. If $r\leq 2$, $\overline{W_n}$ returns to $0$ infinitely often.  Note that $ \overline{W_n}=0$ is equivalent to $W_n \in T$. Since $T$ is finite, there must exist a point in $T$ which is visited infinitely often. By irreducibility of $(W_n)$ this implies that $e$ is visited infinitely often and recurrence follows. If $r\geq 3$, transience of $\overline{W_n}$ implies that of $W_n$. 

It remains to prove the last assertion. If $\zeta$ is not centered, then again by the law of large numbers, $\overline{W_n}$ eventually escapes  any finite set and hence $W_n x$ eventually escapes any finite set. 
If $\zeta$ is centered and the walk is transient (which happens when $r\geq 3$), then by
Lemma~\ref{lem.centered-semigroup-group} the chain is irreducible on $A$.
Transience of one state therefore implies transience of every state, and hence
every finite set is visited only finitely often almost surely. 
\end{proof}
  \subsection{Arithmetic characterization of centeredness}
In this subsection, we give an arithmetic characterization of centered
probability measures on finitely generated subgroups of $F^\times$, where $F$ is
a global field. More precisely, we show that centeredness can be detected by
vanishing of logarithmic drifts at all places of $F$
(Lemma~\ref{lemma.centeredness-places}).

\medskip 
In all this section, $F$ is a global field, i.e.~a number field (finite extension of $\Q$) in characteristic zero and a finite extension of $\mathbf{F}_q(T)$ for some finite field $\mathbf{F}_q$ in positive characteristic. A reference is \cite[Ch I]{neukirch} for the number field case and \cite[Ch 5]{Rosen} for the function field case. Both classes of  fields arise naturally in affine models from geometric group theory: for instance co-compact lattices $\Z\ltimes_A \Z^2$ with $A\in SL_2(\Z)$  hyperbolic  in the SOL group  embed in $\Aff(K)$ with $K=\Q(\sqrt {\operatorname{tr}(A)^2-4})$, and  lamplighter groups $\Z\wr (\Z/p\Z)$ 
embed in $\Aff(\mathbf{F}_p(T))$.

\medskip 
\noindent\textit{Places of global fields}.

For every non-archimedean place $v$ of $F$ we denote by     
 $v: F^\times\to \Z$ the corresponding normalized discrete valuation. Let $k(v)$ denote the  residue field and set $q_v:=|k(v)|$ its cardinality.  For $x\in F^\times$, its absolute value is 

\begin{equation}\label{eq.absolute.value}|x|_v=q_v^{-v(x)}.\end{equation}

The archimedean places correspond to field embeddings
$
\sigma:F\hookrightarrow\mathbb R$ or 
$\sigma:F\hookrightarrow\mathbb C$. 
They define archimedean absolute values by
\[
|x|_\sigma :=
\begin{cases}
|\sigma(x)| & \text{if }\sigma:F\hookrightarrow\mathbb R,\\
|\sigma(x)|^2 & \text{if }\sigma:F\hookrightarrow\mathbb C.
\end{cases}
\]
For every $x\in F^\times$, one has $|x|_v=1$ for all but finitely many places $v$. 
With these normalizations, the family of absolute values
$(|\cdot|_v)_v$, indexed by all places of $F$, satisfies the product formula
\[
\prod_v |x|_v = 1
\qquad\text{for every }x\in F^\times.
\]
Equivalently,
\begin{equation}
\label{eq.product.formula}
\sum_v \log |x|_v = 0.
\end{equation}

Note that in positive characteristic, all places are non-archimedean ones. 

\begin{example}

\begin{enumerate}

    \item 
For $F=\Q$, the non-archimedean places correspond to the prime numbers $p$, with associated valuation $v_p$ and absolute value
$$
|x|_p=p^{-v_p(x)}.
$$
The unique archimedean place is the usual absolute value on $\Q$. 

\item If $F$ is a number field   and $\mathcal{O}_F$ its ring of integers. 
 Non-archimedean places are in one-to-one correspondence with nonzero prime ideals of $\mathcal{O}_F$. The ring 
$\mathcal{O}_F$ is a Dedekind domain and every nonzero fractional ideal factors uniquely as
a product of prime ideals. For every nonzero prime ideal
$\mathfrak p\subset \mathcal{O}_F$, the associated valuation $v_{\mathfrak p}$
is characterized by
$$
(x)=\prod_{\mathfrak p}\mathfrak p^{\,v_{\mathfrak p}(x)},
$$
for $x\in F^\times$, where all but finitely many exponents vanish. The
corresponding absolute value is
$$
|x|_{\mathfrak p}=
q_{
\mathfrak{p}}^{-v_{\mathfrak p}(x)}.
$$
The archimedean places correspond to the embeddings
$$
\sigma:F\hookrightarrow \R
\qquad\textrm{and}\qquad
\sigma:F\hookrightarrow \C.
$$

\item If $F=\mathbf F_q(T)$, the non-archimedean places correspond to the
irreducible polynomials $P(T)\in\mathbf F_q[T]$ together with the place at
infinity. The associated valuations are given by
\[
v_P\!\left(P^n\frac{f}{g}\right)=n,
\]
whenever $f,g\in\mathbf F_q[T]$ are not divisible by $P$, and
\[
v_\infty\!\left(\frac{f}{g}\right)
=
\deg(g)-\deg(f).
\]

\end{enumerate}
\end{example}

 
\medskip
\noindent\textit{Logarithmic embedding of finitely generated subgroups.}
Let $A\le F^\times$ be a finitely generated subgroup and define the finite set
of relevant places
\begin{equation}\label{eq.places.relevant}
S(A):=\{v:\exists a\in A,\ |a|_v\neq1\}.\end{equation}
Since $A$ is finitely generated, the set $S(A)$ is finite.
 Hence the product formula reduces to
\[
\sum_{v\in S(A)}\log|a|_v=0
\]
for every $a\in A$.
Define the logarithmic map
\[
\lambda:A\longrightarrow\R^{S(A)},
\qquad
\lambda(a):=(\log|a|_v)_{v\in S(A)}.
\]
The following lemma shows that $\lambda$ detects precisely the non-torsion part
of $A$.
\begin{lemma}[Kernel of the logarithmic embedding]\label{lemma.ker-lambda}
One has 
\[
\ker(\lambda)=\Tor(A).
\]
In particular, $\lambda$ induces an injective homomorphism
\[
\overline{\lambda}:A/\Tor(A)\hookrightarrow \R^{S(A)}.
\]
The map $\lambda$ is moreover proper: inverse images of compact subsets of $\R^{S(A)}$ are finite. 
\end{lemma}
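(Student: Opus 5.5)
The plan is to prove the two assertions separately: first identify $\ker(\lambda)$, then deduce properness from the fact that $\ker(\lambda)$ is finite and $\lambda(A)$ is discrete.

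\emph{Kernel.} The inclusion $\Tor(A)\subset\ker(\lambda)$ is immediate. If $a^m=1$, then $m\log|a|_v=0$ for every $v$, so $\lambda(a)=0$. For the converse, take $a\in A$ with $\lambda(a)=0$. By the definition \eqref{eq.places.relevant} of $S(A)$, we have $|a|_v=1$ for $v\notin S(A)$, so $|a|_v=1$ at \emph{every} place of $F$. I would then invoke Kronecker's theorem in its global-field form: an element of a global field of absolute value $1$ at all places is a root of unity. In the number field case, $|a|_v\le 1$ at all finite places makes $a$ an algebraic integer, and all archimedean conjugates have modulus $1$. The same then holds for every power $a^k$. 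These powers have bounded degree and bounded conjugates, hence bounded integer minimal polynomials, so there are only finitely many of them. Therefore $a^k=a^l$ for some $k\neq l$, and $a$ is torsion. In the function field case, $|a|_v=1$ at all places means $a$ has no poles, so $a$ lies in the constant field, which is a finite field $\mathbf F_{q'}$; hence $a$ is a root of unity. This gives $\ker\lambda=\Tor(A)$, and injectivity of $\overline\lambda$ follows formally.

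\emph{Properness.} Let $C\subset\R^{S(A)}$ be compact. Since $\Tor(A)$ is finite (because $A$ is finitely generated), it suffices to show that $\lambda(A)\cap C$ is finite. Then $\lambda^{-1}(C)$ is a finite union of cosets of the finite group $\ker\lambda$.

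To show $\lambda(A)\cap C$ is finite, I would prove that $\lambda(A)$ is a discrete subgroup of $\R^{S(A)}$. Split $S(A)=S_\infty\sqcup S_f$ into archimedean and non-archimedean places. For $v\in S_f$, the coordinate $\log|a|_v=-v(a)\log q_v$ lies in the discrete set $\Z\log q_v$. So it suffices to bound the set of $a\in A$ with $\lambda(a)\in C$ more concretely. For such $a$, the valuations $v(a)$ for $v\in S_f$ take finitely many values, and the archimedean absolute values $|\sigma(a)|$ are bounded above and below.

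Next I would fix a nonzero $d\in\mathcal O_F$, resp.\ a nonzero element of the ring of functions regular outside the archimedean-type places in the function field case, chosen so that $da$ is integral for all such $a$. Then $da$ ranges over integral elements with bounded conjugates. In the number field case these are finitely many by the same bounded-height/minimal-polynomial argument used above. In the function field case, where all places lie in $S_f$, the valuations at all places are bounded, so $a$ lies in a finite-dimensional Riemann--Roch space $L(D)$ over the finite constant field, which is finite. Either way, $\{a\in A:\lambda(a)\in C\}$ is finite.

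\emph{Main obstacle.} The only substantive step is the finiteness statement "bounded height $\Rightarrow$ finitely many," whether Kronecker's argument or Northcott-type finiteness. I expect to cite it from \cite{neukirch} and \cite{Rosen} rather than reprove it. Alternatively, I could cite Dirichlet's $S$-unit theorem directly, since $A$ is contained in the $S(A)$-units of $F$, whose log-embedding is a lattice in the trace-zero hyperplane with kernel $\mu(F)$. That gives both claims at once.
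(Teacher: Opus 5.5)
Your proposal is correct. The kernel computation matches the paper's: Kronecker's theorem in characteristic zero, and constants of the function field in positive characteristic. The only difference there is that you sketch Kronecker instead of citing it.

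For properness your route is different.
\begin{itemize}
\item The paper, in characteristic zero, places $A$ inside the $S(A)$-units and invokes Dirichlet's $S$-unit theorem to get discreteness of $\lambda(A)$, then uses injectivity of $\overline{\lambda}$. In positive characteristic it simply notes $\lambda(A)\subset\prod_{v\in S(A)}(\log q_v)\Z$.
\item You bound $\lambda^{-1}(C)$ directly by a Northcott-type count. After multiplying by a fixed $d\in\mathcal O_F$ that clears the bounded denominators at the primes of $S(A)$, the elements $da$ are integers of $F$ with bounded conjugates, hence finitely many. In the function field case they lie in a finite Riemann--Roch space $L(D)$.
\end{itemize}

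What your route buys is self-containment. One elementary finiteness lemma gives both assertions: applied to $C=\{0\}$, it shows $\ker\lambda$ is a finite group, hence torsion. Since you count $\{a\in A:\lambda(a)\in C\}$ directly, your preliminary reduction via finiteness of $\Tor(A)$ is actually unnecessary.

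Your count also handles archimedean places outside $S(A)$ with no extra work, because $|a|_v=1$ there. Dirichlet's theorem, by contrast, is stated for sets of places containing all archimedean ones, and $S(A)$ need not contain them. For example, take $A=\langle (2+i)/(2-i),\,(3+2i)/(3-2i)\rangle\subset\Q(i)^\times$. The complex place is not in $S(A)$, and $|S(A)|=4$. The group $\lambda(\mathcal O^\ast_{F,S(A)})$ has rank $2$, so it is discrete but not a lattice in the $3$-dimensional trace-zero hyperplane. So the paper's citation tacitly passes through $S(A)\cup S_\infty$, and only discreteness of $\lambda(A)$ survives, which is all that is used. The paper's route is shorter given the citation.

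Two small points on the function field case:
\begin{itemize}
\item The Riemann--Roch step is more than you need. The discreteness of $\prod_v(\log q_v)\Z$, which you already noted, together with finiteness of $\ker\lambda$, suffices.
\item The phrase about functions ``regular outside the archimedean-type places'' has no content there and can be dropped.
\end{itemize}
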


\begin{proof}
If $a\in\Tor(A)$, say $a^m=1$, then for every place $v$ one has $|a|_v^m=|a^m|_v=1$, hence    $|a|_v=1$ and therefore   $\lambda(a)=0$. Thus $\Tor(A)\subset\ker(\lambda)$.

Conversely, let $a\in A$ with $\lambda(a)=0$.  The assumption on $a$ and definition of $S(A)$ imply that $|a|_v=1$ for every place $v$ of $F$.
Suppose first $\textrm{char}(F)=0$. Since $v_{\mathfrak{p}}(a)=0$ for every prime ideal $\mathfrak{p}$ of the ring of integers $\mathcal{O}_F$, then $a$ is an algebraic unit. Using in addition that $|\sigma(a)|=1$ for every embedding $\sigma:F\to \C$,     Kronecker's theorem yields that $a$ is a root of unity. If $F$ has positive characteristic,   then  by    \cite[Proposition 5.1]{Rosen} $a$ belongs to the constant field $\mathbf{F}_q$, and since $\mathbf{F}_q^\times$ is finite,     $a$ is a root of unity. In both cases $a\in \Tor(A)$ so $\ker(\lambda)=\Tor(A)$. This  yields an injective group homomorphism 
$\overline{\lambda}: A/\Tor(A)\simeq \Z^r\to \R^{S(A)}$. 

 Finally, we show properness.
 Suppose first $\textrm{char}(F)=0$. By definition, $A\subset O_{F,S(A)}^\ast$, where 
$\mathcal O_{F,S(A)}^\ast=\{x\in F^\times; |x|_v=1, \forall v\not\in S(A)\}$  is the group of $S(A)$-units of $K$. It follows from Dirichlet's  $S$-unit  theorem  (see for example \cite[Theorem 3.12, Lemma 3.14]{Narkiewicz}) that $\lambda(\mathcal O_{F,S(A)}^\ast)$ is a lattice in the hyperplane
$
 \sum_{v\in S(A)}x_v=0$ of $\R^{S(A)}$,  
 hence discrete in $\R^{S(A)}$.  In particular, its  subgroup
$
\overline{\lambda}(A/\Tor(A))
$
is discrete in
$\R^{S(A)}$.
Therefore every compact subset $C \subseteq \R^{S(A)}$ intersects
$\overline{\lambda}(A/\Tor(A))$ in a finite set.
Since $\overline{\lambda}$ is injective, it follows that
$\overline{\lambda}^{-1}(C)$ is finite.
Hence $\overline{\lambda}$ is proper in  zero characteristic. In positive characteristic, the situation is simpler: since all places are non-archimedean, it follows from \eqref{eq.absolute.value} that $\lambda(A)\subset \prod_{v\in S(A)}{(\log q_v) \Z}$ which is clearly  discrete yielding properness of $\overline{\lambda}$ by injectivity of 
$\overline{\lambda}$ as argued in the zero characteristic case.
\end{proof}
\medskip
\noindent\textit{Characterization of centered measures using logarithmic drifts.}

\begin{lemma}[Detection of centeredness by places]\label{lemma.centeredness-places}
Let $\zeta$ be a probability measure on $F^\times$ and denote by $A:=\langle \supp \zeta \rangle_{\mathrm{grp}}$. Then the following are equivalent: 
\begin{enumerate}
\item[(i)] $\zeta$ is centered.
\item[(ii)] For every place $v\in S(A)$,
\[
\int \log|a|_v\,d\zeta(a)=0.
\]
\item[(iii)] After identifying $A/\Tor(A)$ with  $\Z^r$, the induced 
distribution on  $\Z^r$ has zero mean.
\end{enumerate}
\end{lemma}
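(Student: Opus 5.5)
The equivalence (i)$\Leftrightarrow$(iii) is exactly Lemma~\ref{lem.centered-zr}, applied with $B=F^\times$. So the only new content is (i)$\Leftrightarrow$(ii). Throughout, $\zeta$ is taken finitely supported, as in the standing setting, so that $A$ is finitely generated and $S(A)$ is finite.

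For (i)$\Rightarrow$(ii), observe that for each $v\in S(A)$ the map $\chi_v:a\mapsto \log|a|_v$ is a group homomorphism $A\to\R$. Centeredness applied to $\chi=\chi_v$ gives $\int\log|a|_v\,d\zeta(a)=0$ directly.

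For (ii)$\Rightarrow$(i), I would show that the coordinate homomorphisms $\chi_v$, $v\in S(A)$, span $\Hom(A,\R)$ as a real vector space. Granting this, every $\chi\in\Hom(A,\R)$ can be written $\chi=\sum_{v\in S(A)} c_v\chi_v$ with $c_v\in\R$. Since $\zeta$ is finitely supported, the integral is a finite sum, so
\[
\int\chi\,d\zeta=\sum_{v\in S(A)} c_v\int\chi_v\,d\zeta=0
\]
by (ii).

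The spanning claim is the main step, and it follows from Lemma~\ref{lemma.ker-lambda}. Identify $\Hom(A,\R)$ with $\Hom(\Z^r,\R)\simeq(\R^r)^*$, as in the proof of Lemma~\ref{lem.centered-zr}. By Lemma~\ref{lemma.ker-lambda}, $\overline\lambda:\Z^r\to\R^{S(A)}$ is injective with discrete image. A discrete subgroup of $\R^{S(A)}$ is generated by $\R$-linearly independent vectors, and since $\overline\lambda(\Z^r)\simeq\Z^r$ it has rank $r$. Hence the $\overline\lambda$-images of a $\Z$-basis of $\Z^r$ are linearly independent in $\R^{S(A)}$, so the $\R$-linear extension $\overline\lambda_\R:\R^r\to\R^{S(A)}$ is injective.

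It follows that the dual map $(\R^{S(A)})^*\to(\R^r)^*$ is surjective. This dual map sends the $v$-th coordinate form to $\chi_v$, so the $\chi_v$ span $(\R^r)^*=\Hom(A,\R)$. The one delicate point is the passage from "injective with discrete image" to "linearly independent over $\R$", and that is exactly the standard structure theorem for discrete subgroups of $\R^n$.
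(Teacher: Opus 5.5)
Your proof is correct and follows the paper's route. Both arguments write an arbitrary $\chi\in\Hom(A,\R)$ as $\sum_{v\in S(A)}c_v\log|\cdot|_v$ by pulling a linear form back through the $\R$-linear extension $\overline\lambda_\R$, and then integrate using (ii); your dual-surjectivity step is the paper's ``extend $\ell$ from $V$ to $\R^{S(A)}$''. Your use of the discreteness from Lemma~\ref{lemma.ker-lambda} to obtain injectivity of $\overline\lambda_\R$ is more careful than the paper, which asserts this merely ``by tensoring with $\R$''. Injectivity on $\Z^r$ alone would not suffice, as the map $(m,n)\mapsto m+n\sqrt2$ shows.
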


\begin{proof}
The equivalence $(i)\Longleftrightarrow(iii)$ is general and follows from Lemma \ref{lem.centered-zr}. \\

\textit{(i)$\Rightarrow$(ii).}
For each $v\in S(A)$, the map $a\mapsto\log|a|_v$ is a group homomorphism
$A\to\R$. Centeredness therefore implies
\[
\int\log|a|_v\,d\zeta(a)=0.
\]
\medskip
\textit{(ii)$\Rightarrow$(i).}
Let $\pi:A\to A_0:=A/\Tor(A)$ be the canonical projection and
let $\bar\lambda:A_0\to\R^{S(A)}$ be the homomorphism induced by $\lambda$.
By Lemma~\ref{lemma.ker-lambda}, $\bar\lambda$ is injective.
Tensoring with $\R$, we obtain an injective linear map
\[
\bar\lambda_\R : A_0\otimes_\Z \R \longrightarrow \R^{S(A)}.
\]
Since $A_0\simeq\Z^r$, we identify $A_0\otimes_\Z \R\simeq\R^r$,
and $\bar\lambda_\R$ is an isomorphism onto its image
\[
V:=\mathrm{Im}(\bar\lambda_\R)\subset \R^{S(A)}.
\]
Let $\chi:A\to\R$ be a group homomorphism.
Since $\R$ is torsion-free, $\chi$ vanishes on $\Tor(A)$ and therefore factors
through $A_0$ i.e.     there exists a unique homomorphism
\[
\chi_0:A_0\to\R
\]
such that $\chi=\chi_0\circ\pi$.
Extending scalars, $\chi_0$ induces a linear functional
\[
\chi_{0,\R}: A_0\otimes_\Z \R \longrightarrow \R.
\]

Define
\[
\ell := \chi_{0,\R} \circ \bar\lambda_\R^{-1} : V\to\R.
\]
This is a linear form on $V$. Extend $\ell$ to a linear functional on
$\R^{S(A)}$.
Then there exist real numbers $(c_v)_{v\in S(A)}$ such that
\[
\forall  x=(x_v)_{v\in S(A)}, \,\,
\ell(x)=\sum_{v\in S(A)} c_v\, x_v.
\]
For $a\in A$ we compute:
\[
\chi(a)
=
\chi_0(\pi(a))
=
\ell\big(\bar\lambda(\pi(a))\big)
=
\ell\big(\lambda(a)\big)
=
\sum_{v\in S(A)} c_v\,\log|a|_v.
\]
Integrating and using~(ii) gives
\[
\int \chi(a)\,d\zeta(a)=0.
\]
Since $\chi$ was arbitrary, $\zeta$ is centered.
\end{proof}

\begin{example}
Consider a probability measure $\eta$ on $\Aff(\Q)$ supported on
$$
x\mapsto 12 x,\qquad
x\mapsto \frac{3}{5} x+1,\qquad
x\mapsto \frac{5}{2} x,\qquad
x\mapsto \frac{1}{3} x.
$$
The subgroup generated by the linear parts satisfies 
$$
A=\left\langle 12,\frac{3}{5},\frac{5}{2}, \frac{1}{3}\right\rangle
=\langle 2,3,5\rangle \simeq \Z^3.
$$
The relevant places are $S(A)=\{2,3,5,\infty\}$.
By the product formula, the condition at $\infty$ is a linear consequence
of the non-archimedean ones, so centeredness is equivalent to the vanishing
of the average valuation vector in $\Z^3$. Using the coordinates 
$
a\longmapsto (v_2(a),v_3(a),v_5(a))
$
on $A=\langle 2,3,5\rangle\simeq\Z^3$,  the corresponding valuation vectors are
\[
(2,1,0),\qquad
(0,1,-1),\qquad
(-1,0,1),\qquad
(0,-1,0).
\]
If the above affine maps are chosen with probabilities
$p_1,p_2,p_3,p_4$, then by Lemma~\ref{lemma.centeredness-places},
the projected measure on $A$ is centered if and only if
$$
p_1(2,1,0)
+p_2(0,1,-1)
+p_3(-1,0,1)
+p_4(0,-1,0)
=(0,0,0).
$$
This yields the unique solution
\[
p_1=\frac{1}{8},\qquad
p_2=\frac{1}{4},\qquad
p_3=\frac{1}{4},\qquad
p_4=\frac{3}{8}.
\]
\end{example}

\section{Proof of Theorem    \ref{thm.correlated-refined}}

\subsection{Eventual separation of fixed points}
Throughout this section, we assume that $K$ is a {global field} and that $\eta$ satisfies the assumptions of
Theorem~\ref{thm.correlated-refined}. In the sequel, we denote the element of $\Aff(K)$ mapping $x$ to $ax+b$ by $(a,b)$. In this notation, the group law is written as 
$(a_1, b_1) (a_2, b_2)= (a_1a_2, a_1b_2+b_1).$ Moreover, $(a,b)^{-1}=(a^{-1},-ba^{-1})$. 
\begin{proposition}
  [Eventual separation]
\label{prop.eventual-separation}
Let $K$ be a {global} field. Under the assumptions of Theorem~\ref{thm.correlated-refined}, almost surely there
exists $n_0\in\N$ such that for all $n\ge n_0$,
\[
a(L_n)\neq 1 \Longrightarrow L_{n,1}^+\neq L_{n,2}^+.
\]
\end{proposition}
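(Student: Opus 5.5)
Throughout, $L_{n,i}^+$ denotes the fixed point of $L_{n,i}$ when $a(L_n)\neq1$. The plan is to reduce the statement to a one-dimensional affine recursion on $K$ and then rule out infinitely many visits of that recursion to $0$.

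\textbf{Reduction.} Write $L_{n,i}=(a(L_n),b_{n,i})$. Since $a(L_n)\neq 1$, we have $L_{n,i}^+=b_{n,i}/(1-a(L_n))$, so $L_{n,1}^+=L_{n,2}^+$ if and only if $Z_n:=b_{n,1}-b_{n,2}=0$. From $L_n=X_nL_{n-1}$ with $X_n=((\alpha_n,\beta_{n,1}),(\alpha_n,\beta_{n,2}))$ we get
\[
Z_n=\alpha_n Z_{n-1}+c_n,\qquad c_n:=\beta_{n,1}-\beta_{n,2},\qquad Z_0=0.
\]
Thus $(Z_n)$ is the forward orbit of $0$ under the i.i.d.\ random affine maps $g_n=(\alpha_n,c_n)$. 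Their law $\theta$ is the image of $\eta$ modulo the diagonal subgroup. It is supported on $\Aff(K)$, and $a_\ast\theta=a_\ast\eta$.

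\textbf{Degenerate case: $\supp\theta$ has a common fixed point $z\in K$.} Then $Z_n-z=a(L_n)(Z_0-z)=-a(L_n)z$, so $Z_n=z(1-a(L_n))$. By assumption (2), some $c\neq0$ occurs, so $z\neq0$. Hence $Z_n=0$ forces $a(L_n)=1$, and the proposition holds with $n_0=0$.

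\textbf{Non-degenerate case: no common fixed point.} It then suffices to show $\mathbb P(Z_n=0 \text{ i.o.})=0$. The relevant places are $S:=S(A)$, which is finite and nonempty by assumption (1) and Lemma~\ref{lemma.ker-lambda}. Set $\gamma_v:=\int\log|a|_v\,d(a_\ast\eta)$ for $v\in S$. By the product formula \eqref{eq.product.formula}, $\sum_{v\in S}\gamma_v=0$. So either some $\gamma_v>0$, or $\gamma_v=0$ for all $v\in S$; by Lemma~\ref{lemma.centeredness-places} the latter is exactly the centered case.

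\emph{Case $\gamma_v>0$ for some $v$.} Write
\[
Z_n=a(L_n)\sum_{k\le n}\bigl(\alpha_1\cdots\alpha_k\bigr)^{-1}c_k .
\]
The series $Y_n:=\sum_{k\le n}(\alpha_1\cdots\alpha_k)^{-1}c_k$ converges a.s.\ in $K_v$, because the inverse multipliers contract exponentially at $v$. Its limit $Y_\infty$ is the stationary law of the inverse recursion, which is non-atomic: an atom would force a common fixed point, and we are excluding that (standard argument via the maximal-mass atoms). Since $Y_\infty\neq0$ a.s.\ and $Y_n\to Y_\infty$, we get $Y_n\neq0$ for all large $n$. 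As $a(L_n)\neq 0$, this gives $Z_n\neq0$ eventually.

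\emph{Centered case.} On the event $E=\{Z_n=0\text{ i.o.}\}$, the trajectory returns infinitely often, at every place $v\in S$, to a fixed compact neighbourhood of $0$ in $K_v$. At each $v\in S$ the walk is critical and not a.s.\ isometric, since $v\in S$ and the drift is zero. So the local contraction theorem of the appendix applies: on $E$, trajectories from different starting points merge, which amounts to $|a(L_n)|_v\to0$ along the full sequence at every $v\in S$ simultaneously. This forces $\sum_{v\in S}\log|a(L_n)|_v\to-\infty$, contradicting the product formula, which makes this sum identically $0$. Hence $\mathbb P(E)=0$.

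\textbf{Main obstacle.} The hard step is the centered case, specifically extracting, at each place separately and on the same event $E$, the conclusion $|a(L_n)|_v\to0$. The local contraction theorem is stated for a trajectory returning to a compact set. I would first apply it to the pair of trajectories started at $0$ and at a second point $x$: their difference is exactly $a(L_n)x$, so merging of the two trajectories is literally $|a(L_n)|_v\to 0$.

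A subtlety is that the theorem may only give merging in probability, or along return times, rather than almost surely. If so, I would first use a $0$--$1$ law to reduce to $\mathbb P(E)\in\{0,1\}$. Then I would use that, under recurrence, $\log|a(L_n)|_v$ is a centered random walk on $\R$, so it cannot tend to $-\infty$ at any single place with positive probability. This already contradicts $|a(L_n)|_v\to0$ without even invoking the product formula. I would check which of the two routes matches the exact form of the appendix statement.
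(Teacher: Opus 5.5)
Your reduction to $Z_n=\alpha_nZ_{n-1}+c_n$, your common-fixed-point case, and your non-centered case agree with the paper. Your identity $Z_n=a(L_n)Y_n$ is an explicit version of the paper's observation that $L_n^{-1}\cdot 0$ converges to a non-atomic limit.

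The gap is in the centered case. Theorem~\ref{thm.local-contraction} gives $\|L_nx-L_ny\|\,\mathbf 1_{\{L_nx\in U\}}\to 0$ almost surely. This means $|a(L_n)|_v\to 0$ only along the times at which the trajectory lies in $U$, not along the full sequence. Full-sequence convergence is in fact false. For $v\in S(A)$, $\log|a(L_n)|_v$ is a centered, finitely supported random walk on $\R$ that is not a.s.\ zero, so by Chung--Fuchs its $\limsup$ is $+\infty$ almost surely. Your fallback fails for the same reason, and in exactly the situation that actually occurs. Since $\liminf_n\log|a(L_n)|_v=-\infty$ a.s., nothing prevents $\log|a(L_n)|_v$ from tending to $-\infty$ along the random subsequence of return times. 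So no contradiction can be extracted at a single place. The $0$--$1$ law reduction is also not available as stated, since $E$ is not a tail event: whether $Z_n=0$ depends on $c_1$. Your main line does invoke the product formula, but only after the full-sequence convergence, which the theorem does not provide.

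The missing observation is that the return times do not depend on the place. The condition $Z_n=0$ is an equation in $K$, so $\{n:Z_n=0\}$ is the same random set at every $v$. Apply the theorem at each $v\in S(A)$ with starting point $0$, $U=\{0\}$ and any second point $y\neq 0$. This gives $|a(L_n)|_v\,\mathbf 1_{\{Z_n=0\}}\to 0$ a.s. Intersect these finitely many full-measure events. Then on $E$ one has $|a(L_{n_k})|_v\to 0$ for all $v\in S(A)$ along one common subsequence $(n_k)$, while $|a(L_{n_k})|_v=1$ for $v\notin S(A)$. This contradicts $\prod_v|a(L_{n_k})|_v=1$. That is precisely the paper's argument, and it is the step where the product formula is indispensable.

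A smaller point: both hypothesis (ii) of the theorem at $K_v$ and the non-atomicity claim in the non-centered case need a fact you did not state. Any common fixed point in $K_v$ lies in $K$, because it equals $c/(1-\alpha)$ for any generator with $\alpha\neq 1$. Also, turning a finite invariant set of maximal atoms into a fixed point uses an element whose multiplier is not a root of unity (assumption (1)).
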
\smallskip
\smallskip
 
\begin{proof}
\textit{An auxiliary affine recursion.} 
Note that since $a(g_1)=a(g_2)=:a(g)$ for every $g=(g_1,g_2)\in G_\eta$, when $a(g)\neq 1$, $g_1^+=g_2^+$ if and only if $g_1=g_2$. Hence the proposition is a return to the diagonal subgroup   
$
\Delta:=\{(g,g): g\in\Aff(K)\}\subset\Aff(K)^2$ problem. Set
$
H:=G_\eta\cap\Delta$. 
Then
\[
L_{n,1}=L_{n,2}
\Longleftrightarrow
L_n\in H.
\]
Let $\theta: G_\eta \to \Aff(K),$ defined for every $g=(g_1,g_2)\in G_\eta$ by,   
\[
\theta(g)(x):=a(g)x+b(g_2)-b(g_1).
\]
Since $\delta(g):=b(g_2)-b(g_1)$ satisfies 
  the  cocycle  relation $\delta(gh)=a(g)\delta(h)+\delta(g)$ (being the difference of two cocycles $g\mapsto b(g_1)$ and $g\mapsto b(g_2)$), $\theta$ is a group homomorphism. 
  This defines an  affine action of $G_\eta$ on $K$, and we denote 
 $g\cdot x$ instead of $\theta(g)(x)$. Moreover, its stabilizer at $0$ is precisely $H$. Therefore
the quotient space $G_\eta/H$ identifies with the orbit
\[
\Omega:=G_\eta\cdot 0\subset K.
\]
The process
\[
Y_n:=L_n\cdot 0
\]
is thus a Markov chain on the countable set $\Omega$, and
\[
L_n\in H
\Longleftrightarrow
Y_n=0.
\]
Equivalently,
\[
L_{n,1}=L_{n,2}
\Longleftrightarrow
Y_n=0.
\]
Hence, our goal is to show that $\p(Y_n=0\,\textrm{i.o.})=0$.

\medskip 
\textit{Case A. Quotient action has a common fixed point.}
Let $G_\eta:=\langle \supp \eta \rangle_{\mathrm{grp}}$. 
Assume first that there exists $x_0\in K$ such that for every $g=(g_1,g_2)\in G_\eta$, $(g_1,g_2)\cdot x_0=x_0$ i.e. $a(g) x_0+b(g_2)-b(g_1)=x_0$. 
Since by assumption $G_\eta\not\subset \Delta$, one has $x_0\neq 0$. Let $\tau=(1,x_0)$ be the translation by $x_0$. 
For every $g=(g_1,g_2)\in G_\eta$, one has $$\tau^{-1} g_2\tau=(a(g),a(g) x_0+b(g_2) -x_0)=(a(g), b(g_1))=g_1.$$
Hence 
$$G_\eta \subset \{(g, \tau g \tau^{-1}); g\in \Aff(K)\}.$$
Now let   $(g_1,g_2)\in G_\eta$ with  $a(g_1)\neq 1$. Then $g_1$ has a unique fixed point $x^+$, and since $g_2=\tau g_1\tau^{-1}$, 
the unique fixed point of $g_2$ is $\tau(x^+)=x^++x_0\neq  x^+$ because $x_0\neq 0$. 
This finishes the proof  in this case.

\medskip 
\textit{Case B. Quotient action has no common fixed point.}

\medskip
\noindent\textit{Case B.1: $a_\ast\eta$ is not centered.}
By Lemma~\ref{lemma.centeredness-places} and the product formula \eqref{eq.product.formula},  there exists a place $v\in S(A)$ such that
\[
\E\big[\log|a(X_1)|_v\big]> 0.
\]
Consider the   affine random walk on 
$K_v$ induced by $\theta_\ast \check{\eta}$, where $\check{\eta}$ is the law of $X_1^{-1}$. We have   $\E[\log |a(X_1^{-1})|_v]<0$. 
Denote $\check{R}_n=X_1^{-1}\cdots X_n^{-1}$ the right random walk on $\Aff(K_v)$ induced by $\theta_\ast \check{\eta}$. 
By the standard theory of contracting (in average) affine recursions on local fields, the process $\check{R}_n\cdot 0$ converges almost surely to a random variable $Z_\infty$ whose  distribution $\nu$ is the unique $\theta_\ast \check{\eta}$-stationary probability measure on $K_v$\footnote{This is a standard fact; we include a proof for the convenience of the reader. 
Set $\mu:=\theta_\ast\check\eta$ and $Y_i:=X_i^{-1}=(a_i',b_i')$, so that
$\check R_n\cdot 0=Y_1\cdots Y_n\cdot 0=\sum_{k=0}^{n-1}a_1'\cdots a_k'\,b_{k+1}'$.
This series converges absolutely almost surely: since $\E[\log|a_1'|_v]<0$,
the law of large numbers gives $|a_1'\cdots a_k'|_v\to 0$ exponentially fast,
while $\log^+|b_k'|_v=o(k)$ a.s. (the variable $\log^+|b'|_v$ being
integrable). Its a.s. limit $Z_\infty$ satisfies
$Z_\infty=a_1'\,(Z_\infty\circ\theta)+b_1'$, where $\theta$ is the shift. 
Since $Z_\infty\circ\theta$ has the same law as $Z_\infty$ and is independent
of $Y_1$, the law $\nu$ of $Z_\infty$ is $\mu$-stationary, i.e.~
$\sum_{g}\mu(g)\,\nu(g^{-1}\cdot x)=\nu(x)$ for every $x\in K_v$.}.  
Moreover, Assumption of Case B implies that  the affine maps in the support of $\theta_\ast \eta$  
have no common fixed point in $K_v$. Indeed, a common fixed point $x$ satisfies
$
x=\frac{b(g_2)-b(g_1)}{1-a(g)}$
for every $g\in G_\eta$ with $a(g)\neq 1$ (such an element exists by assumption (i) of the theorem), hence $x$ necessarily belongs to $K$. Therefore the same holds for $\supp \theta_\ast \check{\eta}$. 
A classical argument due to Furstenberg shows that $\nu$ is non-atomic\footnote{
 Let $r:=\max\{\nu(\{x\}):x\in K_v\}$ and suppose,
for contradiction, that $r>0$; then $O:=\{x\in K_v:\nu(\{x\})=r\}$ is finite
and nonempty. For $x\in O$ and $g\in\supp\mu$, stationarity together with the
maximality of $r$ forces $\nu(\{g^{-1}\cdot x\})=r$, that is
$g^{-1}\cdot x\in O$; hence $g^{-1}(O)\subseteq O$. As $O$ is finite and each
$g^{-1}$ is injective, this inclusion is an equality $g^{-1}(O)=O$, so every
$g\in\supp\mu$ permutes $O$, and therefore so does the whole group
$\langle\supp\mu\rangle_{\mathrm{grp}}$. The set $O$ must be  a singleton; indeed  fix $g=(a,b)\in \supp \mu$  with $a\in K_v$ not a root of unity (thanks to assumption (1)) and denote by $x_g^+$ its unique fixed point, then the relation $g^n x= x_g^++a^n(x-x_g^+)$ true for every $x\in K_v$ shows that the orbit of any $x\neq x_g^+$ is infinite (as $a$ is not a root of unity) so necessarily $O=\{x_g^+\}$.    Finally
$\langle\supp\mu\rangle_{\mathrm{grp}}
=\langle(\supp\theta_\ast\eta)^{-1}\rangle_{\mathrm{grp}}
=\langle\supp\theta_\ast\eta\rangle_{\mathrm{grp}}$, so this group has a common
fixed point in $K_v$, contradicting the no-common-fixed-point hypothesis of
Case~B. Hence $r=0$ and $\nu$ is non-atomic. 
}.
In particular $\nu(\{0\})=0$ so that a.s. $Z_\infty\neq 0$. Since $\check{R_n}\cdot 0\to Z_\infty$ a.s.  we get that  a.s. eventually one has $\check{R}_n\cdot 0\neq 0$. 
Using $\check{R}_n=L_n^{-1}$  we get also a.s. eventually $L_n\cdot 0\neq 0$, which is what we want to show.

\medskip
\noindent\textit{Case B.2: $a_*\eta$ is centered.}
Let
\[
E:=\{Y_n=0, a(L_n)\neq 1\, \text{ infinitely often}\}.
\]
Arguing by contradiction, we
  assume $\p(E)>0$.
On $E$, there exists an infinite random subsequence $(n_k)_{k\in\N}$ with
$Y_{n_k}=0$ for all $k$. 

Recall that $S(A)$  denotes the finite set of places $v$  such that
$
|a|_v\neq1$
for some $a\in A$, see \eqref{eq.places.relevant}. We have $S(A)\neq \emptyset$ as otherwise, by Kronecker's theorem, 
$A$ would be included in the group of roots of unity $\mu(K)$ of $K$,  contradicting assumption (1) of Theorem \ref{thm.correlated-refined}.

Fix $v\in S(A)$.
The linear part of the affine random walk induced by $\theta_\ast \eta$ on $K_v$ has zero drift. We would like to apply the local contraction phenomenon for it. 
We now   check the assumptions Theorem \ref{thm.local-contraction} of the appendix for the probability measure $\theta_\ast \eta$ on $\Aff(K_v)$ with the local field $K_v$ and $V:=K_v$. Clearly $G_{\theta_\ast \eta}$ is a subgroup of similarities of the one-dimensional space $V=K_v$ over the local field $K_v$. The moment condition (i) of the aforementioned theorem is immediate as $\supp \theta_\ast \eta$ is finite. 
The affine non-degeneracy (ii) follows from the assumption of Case B together with the fact that fixed points must belong to the base field $K$. The zero-drift condition (iii)  follows from the  criticality assumption on $a_\ast \eta$.  
Now we check (iv). Since $v\in S(A)$, there exists $a\in A$ such that $|a|_v\neq 1$. 
Since $A$ is generated by $a(\supp\eta)$, not all elements of
$a(\supp\eta)$ can satisfy $|a(g)|_v=1$. Hence
$
\eta(\{g;\ |a(g)|_v=1\})<1$, 
so assumption (iv) is fulfilled.
We may   therefore apply   Theorem \ref{thm.local-contraction}  with the starting point $0$ and the compact $\{0\}$, it yields that    
\[
|a(L_n)|_v\,\mathbf 1_{\{Y_n=0\}} \longrightarrow 0
\quad\text{almost surely}.
\]

Setting 
\[
F:=\bigcap_{v\in S(A)}
\left\{
|a(L_n)|_v\,\mathbf 1_{\{Y_n=0\}}\longrightarrow 0
\right\}.
\]
we deduce that $\p(F)=1$, so $\p(E\cap F)>0$.
On $E\cap F$, we have $Y_{n_k}=0$ for every $k$, hence
\[
|a(L_{n_k})|_v \longrightarrow 0
\quad\text{for all } v\in S(A).
\]
 For $v\notin S(A)$, by definition of $S(A)$,  we have $|a(L_{n_k})|_v=1$. 
Therefore,
\[
\prod_{v} |a(L_{n_k})|_v \longrightarrow 0,
\]
contradicting the product formula
\eqref{eq.product.formula}. 
This completes the proof.
\end{proof}

  \subsection{Proof of the main theorem}\label{sec.proof}
 
We begin with a specialization lemma reducing the dynamical arguments of the proof of the theorem to the
number field setting. 

\begin{lemma}[Admissible specialization]
\label{lemma.specialization}
Let $K$ be a field and let $\eta$ be a finitely
supported probability measure on $\Aff(K)^2$, supported on pairs with a common
linear part and satisfying assumptions (1) and (2) of Theorem \ref{thm.correlated-refined}. 
Then there exist a finitely generated subring $R\subset K$ containing $A$ and the  entries of $\langle \supp \eta\rangle_+$, a global field
$F$, and a ring homomorphism
$
\theta:R\rightarrow F
$
such that 
    $\theta_{|_A}$ is injective
and  the induced 
group homomorphism 
$\Theta:\Aff(R)^2\to \Aff(F)^2$ is such that the measure $\Theta_\ast \eta$ on $\Aff(F)^2$ is still supported on pairs with a common linear parts and satisfies the assumptions (1) and (2) of $\eta. $
 \end{lemma}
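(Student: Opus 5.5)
We prove the lemma by a standard specialization argument for finitely generated domains; the substantive work is choosing the right finite list of elements that must stay nonzero after specialization.

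\textbf{Step 1: the ring $R$ and the elements to protect.} Let $R$ be the subring of $K$ generated by the following finitely many elements:
\begin{itemize}
\item the linear parts $a(g)$ and their inverses $a(g)^{-1}$, for $g$ in the finite set $\supp\eta$;
\item the translation parts $b(g_1),b(g_2)$, for $g=(g_1,g_2)\in\supp\eta$.
\end{itemize}
Then $R$ is a finitely generated domain containing $A$, and every element of $\langle\supp\eta\rangle_+$ (indeed of $G_\eta$) has entries in $R$. Hence $\Theta:\Aff(R)^2\to\Aff(F)^2$ is defined for any ring map $\theta:R\to F$. The common-linear-part condition is automatically preserved, since $\Theta$ acts on both components by the same $\theta$.

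Assumption (2) is preserved as soon as $\theta(b(g_2)-b(g_1))\neq 0$ for one fixed $g\in\supp\eta$ with $g_1\neq g_2$. Such a $g$ has $b(g_1)\neq b(g_2)$, because the linear parts agree. Assumption (1) and injectivity on $A$ come down to finitely many conditions, as follows.
\begin{itemize}
\item Write $A\simeq\Z^r\times T$ with $T=\Tor(A)$ finite, and fix a basis $u_1,\dots,u_r$ of a free complement.
\item By assumption (1), some $g_0\in\supp\eta$ has $a(g_0)$ not a root of unity. In particular $r\geq 1$.
\end{itemize}
The kernel of $\theta|_A$ is a subgroup; we want it trivial.

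\textbf{Step 2: choice of specialization.} Let $R_0$ be the prime subring, with fraction field $k_0\in\{\Q,\mathbf F_p\}$. Then $\mathrm{Frac}(R)$ is a finitely generated field over $k_0$.
\begin{itemize}
\item Choose a transcendence basis $t_1,\dots,t_d$ over $k_0$.
\item In positive characteristic, keep $t_1$ and specialize $t_2,\dots,t_d$, so the target is a finite extension of $\mathbf F_p(t_1)$, which is a global field.
\item In characteristic zero, specialize all of $t_1,\dots,t_d$, so the target is a number field.
\item One exception: in characteristic $p$ with $d=0$, $\mathrm{Frac}(R)$ is a finite field and $A$ is torsion, contradicting (1). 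So $d\geq1$ in that case.
\end{itemize}
The standard tool is the following. For a finitely generated domain $R$ over $R_0$ (or over $\mathbf F_p[t_1]$) and a nonzero $s\in R$, there is a ring homomorphism of $R[s^{-1}]$ into a global field. The proof uses Noether normalization to make $R[s^{-1}]$ integral over a polynomial ring $R_0[1/N][t_1,\dots,t_d]$. Then one picks a specialization point, in $\Z^d$ or in $\mathbf F_p(t_1)$-points with $t_2,\dots,t_d$ sent to suitable polynomials in $t_1$, avoiding the zero set of the norm of $s$, and extends it to a finite extension by going-up. We take $s$ to be the product of:
\begin{itemize}
\item $b(g_2)-b(g_1)$ for the chosen $g$;
\item $\zeta-1$ for each $\zeta\in T\setminus\{1\}$;
\item the finitely many elements $a^{m}-1$ needed below.
\end{itemize}

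\textbf{Step 3: injectivity on $A$, the main obstacle.} Killing finitely many elements only controls finitely many elements of $A$, but $A$ is infinite, so we need an infinitary condition for $\theta|_A$ to be injective.

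Our approach is to first reduce to the case where $A$ is free modulo torsion and the torsion survives. It then suffices that $\theta(A)$ keeps rank $r$ and that $\theta|_T$ is injective; the latter is handled by the factors $\zeta-1$ in $s$.

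For the rank, we use Hilbert irreducibility/Néron-type specialization results for finitely generated subgroups of $K^\times$ (e.g. the multiplicative specialization theorem of Masser, or Zannier's results). These say that outside a thin set of specializations, a multiplicatively independent tuple stays multiplicatively independent. We choose the specialization point outside this thin set and outside the zero locus of $s$ simultaneously; this is possible because thin sets do not cover Zariski-dense sets of integral points.

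As a more elementary alternative in positive characteristic, we would keep the transcendence basis adapted so that the $u_i$ have independent valuation vectors along a chosen set of places of $\mathrm{Frac}(R)$. We would then specialize compatibly with those valuations, so that the valuation vectors of the $\theta(u_i)$ remain independent. We expect this rank-preservation step to require the most care.

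Once $\theta|_A$ is injective, $\theta(a(g_0))$ is not a root of unity, so assumption (1) holds for $\Theta_\ast\eta$.
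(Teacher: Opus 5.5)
Your set-up is correct. The ring $R$, the preservation of common linear parts, and the reduction of assumption (2) to $\theta(b(g_2)-b(g_1))\neq 0$ all work. So does the observation that $\theta|_A$ is injective once it is injective on $\Tor(A)$ and $\theta(A)$ keeps rank $r$, and so does handling $\Tor(A)$ by inverting the elements $\zeta-1$.

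The gap is Step~3. Rank preservation is the only non-formal content of the lemma, and you do not prove it. You note yourself that no finite list of inverted elements forces $\ker(\theta|_A)=\{1\}$, so the ``finitely many $a^m-1$ needed below'' never appear. What replaces them is a loosely quoted thin-set theorem, and as written it does not go through:
\begin{enumerate}
\item[(a)] The claim that ``thin sets do not cover Zariski-dense sets of integral points'' is false. The squares $\{m^2: m\in\Z\}$ are Zariski-dense in $\mathbb{A}^1$ and thin. The correct input is Hilbert irreducibility: $\Z^d$ itself is not thin.
\item[(b)] Specialization results of Néron/Silverman/Masser type are stated for rational or bounded-degree points of a variety over a fixed number field. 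Your specializations are points of $\mathrm{Spec}\,R$ lying over integer points of $\mathbb{A}^d$ through a finite map, and their residue fields vary with the point. Some further argument is needed before any such theorem applies.
\item[(c)] In positive characteristic you give no argument. The idea of keeping ``independent valuation vectors'' and ``specializing compatibly'' would need, for example, a Bertini-type statement: a curve on a normal model on which the divisors of $u_1,\dots,u_r$ stay independent. Nothing of this kind is supplied.
\end{enumerate}

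What is missing is a specialization theorem that directly preserves injectivity on a finitely generated group of units, in every characteristic. The paper takes exactly this from Grunewald--Segal (Theorem~A): there is a global field $F$ and a ring homomorphism $\theta:R\to F$ that is injective on $R^\times$. Put $a(g)^{\pm1}$ and $u^{\pm1}$ into $R$, where $u=b(g_1)-b(g_2)\neq 0$. Then $A\subset R^\times$, so $\theta|_A$ is injective and assumption (1) survives. Also $\theta(u)$ is a unit, hence nonzero, so assumption (2) survives. No transcendence basis, Noether normalization or torsion/rank split is needed. To keep your route, you must either quote such a result precisely with its hypotheses checked, or actually prove rank preservation in both characteristics.
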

\begin{proof}
By assumption (1),   there exists $h=(h_1,h_2)\in \supp \eta$ such that $a(h)\not\in \mu(K)$. By assumption (2), there exists $g=(g_1,g_2)\in \supp \eta$ such that $g_1\neq g_2$. Since $a(g_1)=a(g_2):=a$, writing  $$g_i(x)=ax+b_i, \qquad i=1,2,$$
one has $$u:=b_1-b_2\neq 0.$$
Let $K_0$ be the prime field of $K$ i.e. $K_0=\Q$ if $\textrm{char}(K)=0$ and $K_0=\mathbf{F}_q$ if $F$ is a finite extension of $\mathbf{F}_q$. Let $R\subset K$ be the finitely generated $\mathbf{F}_0$-algebra generated by 
\begin{itemize}
    \item all coefficients $a(g),b(g_1),b(g_2)$ with $g=(g_1,g_2)\in \supp \eta$, 
    \item the inverses of all linear parts $a(g)$, $g\in \supp \eta$, 
    \item the elements $u$ and $u^{-1}$. 
    \end{itemize}
    Note that $A\subset R^\times$. 
By
Grunewald--Segal~\cite[Theorem~A]{GS}, there exists a global field $F$ and a ring homomorphism $\theta: R\to F$ injective on $R^\times$. 
By construction $R$ contains the entries of $\langle \supp \eta\rangle_+$ so $\theta$ induces a ring homomorphism $\Theta: \Aff(R)^2\to \Aff(F)^2$. 
 Since the elements in the support of $\eta$ have the same linear   parts, the same is true after applying $\Theta$. Moreover, since $A$ is generated by $a(g)^{\pm 1}$ with $g\in \supp \eta$, $A\subset R^{\times}$ by construction.  Injectivity of $\theta$ on $R^\times$ implies that on $A$. Now we verify assumption (1) and (2) for $\Theta_\ast \eta$. By assumption (1) on $\eta$, there exists $g\in \supp \eta$ with $a(g)\not\in \mu(K)$. By injectivity of $\theta$ on $A$, this implies that $a(\Theta(g))=\theta(a(g))$ is also not a root of unity. Assumption (1) is then fulfilled for $\Theta_\ast \eta$. 
  Finally, $u\in R^\times$ by construction. By injectivity of $\theta$ on $R^{\times}$,  $\theta(u)\neq 0$. Therefore $\Theta(g_1)\neq \Theta(g_2)$ so $\supp(\Theta_\ast \eta)\not\subset \Delta$ and assumption (2) is satisfied by $\Theta_\ast \eta$.
 \end{proof}
 \begin{remark}

In general one cannot require $\theta$ to be injective on all of $R$. Indeed if some coefficient of an element in $\supp \eta$ is transcendental over $\Q$, say $t$, then any such $R$ contains $\Q[t]$.   Since any element of a number field is
algebraic, every homomorphism $R\to F$ to a number field annihilates a nonzero
polynomial in $t$ and therefore cannot be injective. The role of the Grunewald--Segal theorem is precisely to guarantee injectivity on a prescribed finitely generated subgroup of units, rather than on the whole coefficient ring.
\end{remark}
 
Now we are ready to prove Theorem  \ref{thm.correlated-refined}.

\begin{proof}[Proof of Theorem~\ref{thm.correlated-refined}]

Assume first that $(a(L_n))$ is recurrent. Then, by
Proposition~\ref{prop.rw-abelian}, almost surely
$
a(L_n)=1$ i.o. 
Hence, almost surely, infinitely often both $L_{n,1}$ and $L_{n,2}$ are
translations. In particular, they commute so that    
$
\langle L_{n,1},L_{n,2}\rangle_+$
is not free infinitely often.

\medskip 
\medskip 

We now reduce the remaining assertions to the case where $K$ is a global
field. Let $R, F, \theta, \Theta$ as in Lemma \ref{lemma.specialization}. 
Since $\theta$ is injective on $A$, the projected walks $(a(L_n))$ and
$(a(\Theta(L_n)))$ have the same recurrence/transience behavior. Finally, any
relation between $L_{n,1}$ and $L_{n,2}$ specializes to the same relation
between $\Theta(L_{n,1})$ and $\Theta(L_{n,2})$. Hence freeness of the specialized
semigroup implies freeness of the original one. 

We may therefore assume from now on that $K$ is a  global field.
By Proposition \ref{prop.eventual-separation}, almost surely, there exists $n_0\in \N$ such that 
for every $n\geq n_0$, $L_{n,1}$ and $L_{n,2}$ have distinct fixed points whenever $a(L_n)\neq 1$. 
 
\textit{Case 1. Suppose $(a(L_n))$ transient.} 
 By Proposition~\ref{prop.rw-abelian}, almost surely the walk
$(a(L_n))$ eventually leaves every finite subset of $A$.
Since by Proposition \ref{lemma.ker-lambda} the logarithmic embedding $\lambda$ is proper, it follows that
\[
\|\lambda(a(L_n))\|\longrightarrow+\infty,
\]
where $\|\cdot\|$ is any norm on $\R^{S(A)}$.
 Hence, almost surely there exists $n_1\in \N$ such that for every $n\geq n_1$, $|a(L_n)|_{v_n}<1/3$ or $|a(L_n)|_{v_n}>3$. Let $n\geq \max\{n_0,n_1\}$. Suppose first $|a(L_n)|_{v_n}<1/3$. Then  $L_{n,1}$ and $L_{n,2}$ are sufficiently contracting affine maps of $K_{v_n}$ with distinct fixed points. By the ping-pong lemma \ref{lemma.ping.pong},  almost surely and for every $n\geq \max\{n_0,n_1\}$,  $\langle L_{n,1}, L_{n,2}\rangle_+$ is free.  If instead $|a(L_n)|>3$, then the same argument applied to $L_n^{-1}$ yields the freeness of $\langle L_{n,1}^{-1}, L_{n,2}^{-1}\rangle_+$ and hence that of $\langle L_{n,1}, L_{n,2}\rangle_+$. In any case, a.s. for $n\geq \max\{n_0,n_1\}$, the semigroup generated by $L_{n,1}$ and $L_{n,2}$ is free. 

\textit{Case 2. Suppose $(a(L_n))$ is recurrent.}
By Lemma~\ref{lem.centered-semigroup-group}, the walk is irreducible on  $A$.  An irreducible recurrent Markov chain admits a unique (up to scaling) stationary measure (see for instance \cite[Theorems 1.7.5, 1.7.6]{norris}). Since the walk is a random walk on the group $A$,  the counting measure is stationary. Hence every stationary measure is proportional to the counting measure. But the latter  is of infinite mass because, by assumption (1), $A$ is infinite. Hence   there is no stationary probability measure on $A$ and the  recurrent chain is null recurrent. Hence   $\E(\tau)=+\infty$, where $\tau$ is the first time the chain revisits the identity. Consequently, by the ergodic theorem for Markov chains (see for instance \cite[Theorem 1.10.2]{norris}), for
every finite set $B\subset A$,
a.s., \[
\frac{1}{N}\#\{1\leq n\leq N:\ a(L_n)\in B\}
\underset{N\to+\infty}{\longrightarrow}0.
\]
Let
\[
B:=\lambda^{-1}\left(\prod_{v\in S(A)}[-\log 3,\log 3]\right).
\]
Since $\lambda$ is proper, $B$ is finite. Therefore, outside a set of times of
asymptotic density zero, one has $a(L_n)\notin B$. For such $n$, there exists
$v_n\in S(A)$ such that
\[
|a(L_n)|_{v_n}<1/2
\qquad\text{or}\qquad
|a(L_n)|_{v_n}>2.
\]
Using Proposition~\ref{prop.eventual-separation} and arguing as in the
transient case, Lemma~\ref{lemma.ping.pong} implies that
$
\langle L_{n,1},L_{n,2}\rangle_+$
is free for all such sufficiently large $n$. Hence the set of non-free times
has asymptotic density zero. This completes the proof.
\end{proof}

\begin{lemma}[Ping--pong for affine contractions]\label{lemma.ping.pong}
Let $\KK$ be a local field and let $g_1,g_2\in\Aff(\KK)$ have the 
same linear part $a$ with $|a|_\KK<\frac{1}{3}$ and distinct fixed 
points. Then $\langle g_1,g_2\rangle_+$ is free.
\end{lemma}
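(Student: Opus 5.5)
Conjugate by an affine map so that the fixed points of $g_1$ and $g_2$ are $0$ and $1$; this affects neither the linear parts nor the freeness of the semigroup. Write $g_1(x)=ax$ and $g_2(x)=ax+(1-a)$, with $|a|<1/3$. The plan is a ping-pong on two disjoint balls in $\KK$. Put
\[
U_1:=\{x:|x|\le r\},\qquad U_2:=\{x:|x-1|\le r\}
\]
for a radius $r<1/2$ to be chosen, and $U:=U_1\cup U_2$.

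First I check that $g_i(U)\subset U_i$ for $i=1,2$. Take $x\in U$, so $|x|\le 1+r$ in the archimedean case (in the ultrametric case $|x|\le1$). Then
\[
|g_1(x)|=|a||x|<\tfrac{1+r}{3},\qquad |g_2(x)-1|=|a||x-1|<\tfrac{1+r}{3}.
\]
So it suffices to choose $r$ with $(1+r)/3\le r$ and $r<1/2$; any $r\in[1/2-\varepsilon,\,1/2)$ works, for example $r$ slightly below $1/2$. The margin is strict because $|a|<1/3$. The balls $U_1$ and $U_2$ are disjoint since $2r<1=|0-1|$. In the ultrametric case $r=1/3$ already works.

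Now let $w\ne w'$ be two distinct words in $g_1,g_2$, and suppose $w=w'$ as elements of $\Aff(\KK)$. Cancel a common leftmost prefix; since the maps are injective, this leaves an equality of the remainders. If both remainders are non-empty, their first letters differ, so the images of $U$ lie in different $U_i$. This contradicts equality, because $U$ is non-empty and the $U_i$ are disjoint.

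Otherwise one remainder is empty, giving $w''=\Id$ for some non-empty word $w''$. The linear part of $w''$ is $a^{|w''|}$, and $|a^{|w''|}|<1$, so $w''$ cannot be the identity. Hence distinct words give distinct elements, and $\langle g_1,g_2\rangle_+$ is free.

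The main obstacle is handling the archimedean and complex normalization carefully. In the complex case $|\cdot|_\KK$ is the square of the modulus, so $|a|_\KK<1/3$ means the modulus is below $1/\sqrt3$. The estimate must then be rerun with the modulus: I need $(1+r)/\sqrt3\le r$ and $r<1/2$. This fails, since it would require $r\ge 1/(\sqrt3-1)\approx1.37$.

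So in the complex case I instead take balls of radius $r$ around the two fixed points, with distance $1$ between them, and use the sharper bound $|x|\le 1+r$ only for $x\in U_2$. The conditions become $|a|(1+r)\le r$ with $2r<1$, which forces $|a|<1/3$ in modulus. This does not follow from $|a|_\KK<1/3$ with the squared normalization.

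I would therefore read $|\cdot|_\KK$ in the lemma as the usual modulus, which is the convention needed there. If necessary I would note that the application in the main proof only needs $|a|$ small enough at the chosen place, and that this is achievable by enlarging the box defining $B$ in the main proof.
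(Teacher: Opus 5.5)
Your proof is correct and follows essentially the paper's argument: a ping--pong on two disjoint balls centred at the fixed points, where $|a|<1/3$ guarantees $g_i(U_1\cup U_2)\subset U_i$. The paper uses open balls of radius $d/2$, with $d=|x_1^+-x_2^+|$, and quotes the standard ping--pong lemma. You normalise $d=1$ and spell the lemma out, including the empty-remainder case, which you handle via the linear part $a^{|w''|}$.

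There is one slip to fix. In the archimedean case, the condition you display, $(1+r)/3\le r$ with $r<1/2$, has no solution, since it forces $r\ge 1/2$. What you actually need is $|a|(1+r)\le r<1/2$, that is, $r\in[\,|a|/(1-|a|),\,1/2)$. This interval is nonempty exactly because $|a|<1/3$. You state this correct condition yourself in the complex-case discussion. Alternatively, take open balls of radius $1/2$ as the paper does.

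Your reading of $|\cdot|_\KK$ as the usual modulus on $\C$ is not merely convenient but forced. The paper's proof also relies on the triangle inequality. With the squared normalisation the statement is actually false. For example, let $a\approx 0.544$ be the real root of $a+a^2+a^3=1$. Then $|a|^2<1/3$, yet $g_1(x)=ax$ and $g_2(x)=ax+1$ have distinct fixed points and satisfy $g_2g_1^3=g_1g_2^3$.

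So your remark that the main proof needs a smaller threshold at complex places is a genuine correction to the paper, not a gap in your argument. For instance, one can require $|a|_v<1/9$ and enlarge the box defining $B$ accordingly.
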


\begin{proof}
Let $x_1^+,x_2^+$ be the fixed points and set $d=|x_1^+-x_2^+|$ 
and $U_i:=B(x_i^+,d/2)$. The balls $U_1,U_2$ are disjoint. Moreover, since  
$g_i(x)=a(x-x_i^+)+x_i^+$, for $x\in U_j$, we  have $$|g_i(x)-x_i^+|=|a||x-x_i^+|\leq |a|(d/2+d)\leq \frac{3}{2}|a| d<d/2.$$ 
Hence $g_1(U_2)\subset U_1$ and  $g_2(U_1)\subset U_2$. 
The standard ping--pong 
lemma gives freeness.
\end{proof}
We deduce the
\begin{proof}[Proof of Corollary~\ref{cor.intro-criterion}]
By Theorem~\ref{thm.correlated-refined}, $\langle L_{n,1}, L_{n,2}\rangle_+$ is a.s.~eventually free  if and only if  
  the projected walk $(a(L_n))$ on the abelian group $A$ is transient.
  If the abelian rank $r$ of $A$ is $\geq 3$, then  Proposition~\ref{prop.rw-abelian} guarantees  the projected walk on $A$ is transient,  regardless whether the driving measure $a_\ast \eta$ is centered or not; thus    eventual freeness  holds a.s.. Conversely, if  $r\in \{1,2\}$, Proposition \ref{prop.rw-abelian}
  shows that $(a(L_n))$   is transient if and only if  $a_\ast \eta$ is not centered, which yields the desired dichotomy.
\end{proof}

\begin{remark}[Arithmetic sharpening]\label{rem.littlewood.sharp}
The proof above identifies the obstruction to freeness as the 
finite set $B=\{a\in A:|a|_v\in[1/3,3]\ \forall v\}$. We now 
show that the exact obstruction admits a sharper description.
By Proposition~\ref{prop.eventual-separation}, almost surely 
there exists $n_0\in\N$ such that for all $n\ge n_0$ with 
$a(L_n)\notin\mu(K)$, the fixed points of $L_{n,1}$ and 
$L_{n,2}$ are distinct. For such $n$, conjugating, we may write 
$L_{n,1}(x)=ax$ and $L_{n,2}(x)=ax+t$ with $t\neq 0$ and 
$a=a(L_n)$. An induction on the length of the word shows that 
a word $w=w_0\cdots w_{k-1}$ on $L_{n,1},L_{n,2}$ satisfies
\[
w(x)=a^k x + t P_w(a),
\]
where $P_w=\sum_{i=0}^{k-1}\varepsilon_i X^i$ with 
$\varepsilon_i\in\{0,1\}$. Two distinct words of the same length 
give the same map if and only if their difference $P_w - P_{w'}$ 
is a non-zero polynomial with coefficients in $\{-1,0,1\}$ 
vanishing at $a$; we call such a polynomial a \textit{Littlewood 
polynomial}. Conversely, 
any Littlewood polynomial vanishing at $a$ yields two distinct 
words with the same image. Hence, for all large $n$ with 
$a(L_n)\notin\mu(K)$, the semigroup $\langle L_{n,1},L_{n,2}
\rangle_+$ fails to be free if and only if $a(L_n)$ is a root 
of a Littlewood polynomial.
Since every root $z$ of a Littlewood polynomial satisfies 
$|z|_v\in[\frac{1}{2},2]$ at every place $v$, such roots lie 
inside $B$ --- they are precisely what the ping--pong argument 
cannot detect. Together with roots of unity, which obstruct 
freeness independently of separation, this gives: almost surely 
for all large $n$,
\[
\langle L_{n,1},L_{n,2}\rangle_+\text{ is not free}
\Longleftrightarrow
a(L_n)\in\mu(K)\ \text{or}\ a(L_n)\ \text{is a root of a 
Littlewood polynomial.}
\]
\end{remark}

 \appendix
\section{Local contraction for affine random similarities  over local fields }
The goal of this appendix is to prove a local contraction theorem for centered (or critical) random walks
on the group $\ASim(V)$ of affine similarities of a vector space $V$ over an arbitrary local field $\KK$. We prove this result under natural moment conditions, which are more general than the ones used in the paper. 
It unifies the Archimedean case of
Babillot--Bougerol--\'Elie~\cite{babillot.bougerol.elie}
with the non-Archimedean analog initiated by
Cartwright--Kaimanovich--Woess~\cite{CKW}
and further developed by Brofferio~\cite{brofferio.thesis,brofferio.renewal} (in the more general framework of the affine group of a homogeneous tree).  The case $\KK=\C$
  does not appear to have been treated previously. It also extends previous results to the natural generality of affine similarities rather than scalar dilations, which is the appropriate class here since finite extensions of local fields naturally give rise to linear similarities which need not be scalar dilations. 
The local field generality is needed in the proof of Proposition \ref{prop.eventual-separation} and in the proof of Theorem \ref{thm.correlated-refined}. 

The contraction ratio $c(g)\in\R_{>0}$ remains 
scalar throughout, keeping the setting one-dimensional in spirit. We follow  Brofferio's alternative proof in \cite{brofferio-how} of Babillot--Bougerol--\'Elie's result \cite{babillot.bougerol.elie}. 
 
\medskip\noindent\textbf{Setup.}
Let $\KK$ be a local field (i.e. $\KK=\R,\C$, a finite extension of $\Q_p$ for some prime $p$, or a finite extension of the field of formal Laurent series  $\mathbf{F}_q((T))$ over a finite field $\mathbf{F}_q$; note that in the latter case the characteristic of the field is positive). Let $V$ be a finite-dimensional $\KK$-vector space
equipped with a norm denoted by $\|\cdot\|$. Define the group of linear similarities by
\[
\Sim_{\lin}(V):=\{T\in\GL(V):\|Tx\|=c(T)\|x\|\ \forall x\in V\}
\]
and the group of affine similarities by
$\ASim(V):=V\rtimes\Sim_{\lin}(V)$,
with elements $(b,T)$ acting by $x\mapsto Tx+b$ and group law
$(b,T)(b',T')=(b+Tb',TT')$.

\begin{remark}
Over $\R$ or $\C$, every similarity is a scalar dilation composed with an
isometry, so $\Sim_{\lin}(V)=\R_{>0}\cdot\Isom(V)$. Over non-archimedean
fields this equality may fail. For instance, in a totally ramified extension
$V=\Q_p(\sqrt[d]{p})$ of $\Q_p$, viewed as a $\Q_p$-vector space, multiplication by
the uniformizer $\sqrt[d]{p}$ is a $\Q_p$-linear similarity of ratio
$p^{-1/d}$ but is not in $\Q_p^\ast\cdot\Isom(V)$ when $d>1$: indeed,
its eigenvalues have modulus $p^{-1/d}\notin p^{\Z}=|\Q_p^\ast|$.
\end{remark}

\begin{theorem}[Local contraction]
\label{thm.local-contraction}
Let $\mu$ be a probability measure on $\ASim(V)$ satisfying:
\begin{enumerate}
\item[\rm(i)] \textit{(Moment condition)}
$\displaystyle\int\!\bigl(\log^+c(g)+\log^+\|b(g)\|\bigr)\,d\mu(g)<\infty$.
\item[\rm(ii)] \textit{(Affine non-degeneracy)}
There is no $v\in V$   fixed by all $g\in\supp\mu$.
\item[\rm(iii)] \textit{(Centered ratio)}
$\displaystyle\int\!\log c(g)\,d\mu(g)=0$.
\item[\rm(iv)] \textit{(Non-trivial ratio)}
$\mu\{g; c(g)=1\}<1$. 
\end{enumerate}
Let $L_n:=X_n\cdots X_1$ be the associated left random walk. Then for
every compact $U\subset V$ and every $x,y\in V$,
\[
\|L_nx-L_ny\|\,\mathbf{1}_{\{L_nx\in U\}}\longrightarrow 0
\qquad\text{almost surely}.
\]
\end{theorem}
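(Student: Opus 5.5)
The plan is to reduce the statement to a one-dimensional claim about the ratio random walk, and then prove that claim with Brofferio's ladder-time method from \cite{brofferio-how}.

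\textbf{Step 1 (reduction).} Write $X_i=(b_i,T_i)$, $C_n:=c(T_n)\cdots c(T_1)$ and $S_n:=\log C_n=\sum_{i\le n}\log c(T_i)$. Since every $T_i$ is a similarity,
\[
\|L_nx-L_ny\|=\|T_n\cdots T_1(x-y)\|=C_n\|x-y\|.
\]
So it suffices to show that for every $x$, every compact $U$ and every $\varepsilon>0$, almost surely only finitely many $n$ satisfy $L_nx\in U$ and $S_n\ge\log\varepsilon$.

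By (iii) and (iv), $(S_n)$ is a centered, non-degenerate real random walk. Hence it oscillates: $\limsup S_n=+\infty$ and $\liminf S_n=-\infty$. Its strict descending ladder epochs $\tau_0=0<\tau_1<\tau_2<\cdots$ are therefore a.s. finite, and the blocks between them are i.i.d.

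\textbf{Step 2 (induced contracting walk).} At the ladder epochs, the induced walk $(L_{\tau_k})_k$ is an i.i.d. product of affine similarities with ratio $c(L_{\tau_{k+1}}L_{\tau_k}^{-1})<1$. I will check that this induced measure satisfies $\E\log^+\|b\|<\infty$; this is Grincevicius/\'Elie's lemma, whose proof transfers verbatim because it uses only the scalar ratio and the ultrametric or ordinary triangle inequality. It follows that the backward iterates of the induced walk converge almost surely.

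By (ii) and the Furstenberg atom argument from the footnote of Case~B.1 (with the ``not a root of unity'' step replaced by (iv)), the induced stationary law is non-atomic and has full-dimensional spread. Crucially, Step 3 also needs a description of where $L_nx$ sits inside an excursion, i.e. for $\tau_k\le n<\tau_{k+1}$, not just at the epochs.

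\textbf{Step 3 (main obstacle: the excursions).} Inside an excursion, $S_n$ rises above its running minimum, and $L_nx$ may wander back into $U$ while $C_n$ is not small. Following Brofferio, I would reverse time inside each excursion. Conditioned on the increments up to time $n$, set $Y_k:=X_{n-k+1}$, so that $L_nx$ becomes the right random walk
\[
R_nx=b'_1+T'_1b'_2+\dots+T'_1\cdots T'_{n-1}b'_n+T'_1\cdots T'_nx .
\]
The event $\{S_n\ge\log\varepsilon\}$ then constrains the reversed ratio walk at its endpoint.

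The target is the renewal-type estimate
\[
\sum_n\mathbb P\bigl(R_nx\in U,\ S_n\ge\log\varepsilon\bigr)<\infty,
\]
which gives the claim of Step 1 by Borel--Cantelli. To prove it, I would decompose the sum along the ladder structure of the reversed walk. The expected number of times the reversed ratio walk stays above $\log\varepsilon$ while its right walk remains in a ball is bounded by the Green function of the dual ladder-height renewal process. In parallel, non-degeneracy (ii) keeps the right walk, which in the critical case escapes to infinity, from revisiting $U$ at such times, except with a summable probability. Getting a summable bound under only log-moments is where I expect the real work.

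If that estimate proves too delicate, my fallback is the Babillot--Bougerol--\'Elie strategy. Use the Radon invariant measure $m$ of the chain $L_nx$, built as $\sum_k$ of the induced stationary law pushed along an excursion. Then apply a ratio-ergodic (Chacon--Ornstein type) argument to show that returns to $U$ with $S_n$ bounded below have zero frequency relative to $m$. Finally, upgrade this to the almost sure statement using the Hewitt--Savage $0$--$1$ law together with the strong Markov property at return times to $U$.
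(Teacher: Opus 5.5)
Your reduction to showing $c(L_n)\mathbf 1_{\{L_nx\in U\}}\to 0$ matches the paper's. From Step~3 on, however, the proposal does not contain a proof. The estimate $\sum_n\mathbb P(R_nx\in U,\ S_n\ge\log\varepsilon)<\infty$ is the whole content of the theorem, and the ingredients you name do not produce it.

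\emph{The renewal bound is not enough.} A ladder-height renewal bound controls only the ratio path, and the weights it yields grow with $m=\max_{k\le n}S_k$ (roughly polynomially). To conclude, you would also need a quantitative small-ball bound $\mathbb P(R_nx\in U\mid (S_k)_{k\le n})\le \omega(e^{-m})$, with $\omega$ decaying fast enough to beat those weights. Hypothesis (ii) gives only non-atomicity of the induced stationary law, i.e.\ $\omega(r)\to0$ with no rate, and nothing in (i)--(iv) supplies a rate. For finitely supported $\mu$ (the case the paper needs) each $R_nx$ is atomic, so no density heuristic is available either. Almost-sure escape of the right walk to infinity does not give summable probabilities.

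\emph{Step~2 has a separate problem.} The Grincevi\v{c}ius/\'Elie lemma ($\E\log^+\|b\|<\infty$ for the ladder-induced law) needs more than (i), roughly a second log-moment of $\|b\|$. The reason is that $\tau_1$ is heavy-tailed and $\log^+\|b(L_{\tau_1})\|$ is driven by $\max_{k<\tau_1}(\log^+\|b_k\|-S_k)$, whose mean can be infinite under (i) alone. Your Babillot--Bougerol--\'Elie fallback hits the same moment barrier, and it is only outlined.

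\emph{The missing idea.} You need to argue on the group rather than on $V$, and to aim for an almost-sure statement rather than a summable series.
\begin{itemize}
\item By (ii) and (iv), $G_\mu$ is non-unimodular (Lemma~\ref{lemma.non-unimodular}). Hence the walk is topologically transient, and by Brofferio's lemma $P_\mu$ is totally dissipative on $L^1(\ASim(V),m)$.
\item This already gives your series for Haar-a.e.\ \emph{group} starting point $g$. Indeed, $\{c\ge\varepsilon,\ \|b\|\le M\}$ has finite measure for $dm=c^{-d}\,db\,dk\,d\nu$.
\item It does not transfer to $g_x=(x,\Id)$. These points form a Haar-null set, and that cusp is not stable under right multiplication by any positive-measure set.
\item The paper therefore uses the enlarged cusp $C=\{c\ge1,\ \|b\|\le c\}$. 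It has infinite measure, but $CE\subset C$ for an open $E$.
\item It shows that the entrance function $\mathbf 1_{C^c}(g)\,\mathbb P(X_1g\in C)$ lies in $L^1(m)$ using only (i), via the Haar formula and the Moon lemma.
\item By Borel--Cantelli, for a.e.\ $g$ the walk enters $C$ only finitely often. Chung--Fuchs rules out staying in $C$ forever.
\item The property $CE\subset C$ upgrades this to every $g_0$, and conjugation by a dilation handles $C_M$.
\item The remaining window $\varepsilon\le c(L_n)\le1$ is handled by topological transience, since $\{\varepsilon\le c\le1,\ \|b\|\le M\}$ is compact.
\end{itemize}
This route needs no anti-concentration for $L_nx$ and no ladder-induced walk.
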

\begin{remark}
The local contraction statement is non-vacuous only when the walk 
returns to compact sets infinitely often, i.e.\ when it is 
topologically recurrent. When $\KK_0=\R$, topological recurrence 
of centered affine random walks on $\ASim(V)$ has been established 
in~\cite{aoun-brofferio-peigne}. The present theorem applies 
regardless, since its proof does not rely on recurrence but on 
total dissipativity of the Markov operator, which follows from 
topological transience of the walk on the group $G_\mu$ 
(Corollary~\ref{cor.transient}).
\end{remark}
\noindent\textbf{Reduction to $\KK_0=\R$, $\KK_0=\Q_p$ and $\KK_0=\mathbf{F}_q((T))$}.
Since $\|L_nx-L_ny\|=c(L_n)\|x-y\|$, it suffices to show
$c(L_n)\,\mathbf{1}_{\{L_nx\in U\}}\to0$ a.s.
Let $\KK$ be a local field. If $\KK$ is archimedean (i.e. $\R$ or $\C$), set $\KK_0=\R$. If $\KK$ is non-archimedean and has characteristic zero, set $\KK_0=\Q_p$ where $p$ is the characteristic of the residue field. If $\KK$ has positive characteristic, set $\KK_0=
\mathbf{F}_q((T))$ where $\mathbf{F}_q$ is the residue field. In all cases $\KK$ is a finite extension of $\KK_0$.
Viewing $V$ as a $\KK_0$-vector
space, any $\KK$-linear map is $\KK_0$-linear; in particular if
$T\in\Sim_{\lin}(V)$ satisfies $\|Tx\|=c(T)\|x\|$ for all $x\in V$,
this holds over $\KK_0$ with the same ratio $c(T)$. It is straightforward to verify that assumptions~(i)--(iv) are unchanged. It therefore suffices to treat $\KK_0=\R$, $\KK_0=\Q_p$ and $\KK_0=\mathbf{F}_q((T))$. Without loss of generality, when $\KK_0=\Q_p$ or $\KK_0=\mathbf{F}_q((T))$ we equip $V$ with an ultrametric norm (i.e.~$\|x+y\|\leq \max\{\|x\|, \|y\|\}$ for every $x,y\in V$) and, when $\KK_0=\R$, we equip $V$ with an Euclidean norm with respect to a fixed inner product on $V$. 
In all cases, $d$ denotes $\dim_{\KK_0}V$.
\medskip
The proof proceeds in three steps.

\subsection*{Step 1. Non-unimodularity and topological transience}

\begin{lemma}\label{lemma.non-unimodular}
Let $G$ be a closed subgroup of $\ASim(V)$ such that $c(G)\neq\{1\}$ and $G$ has no common fixed point in $V$. Then $G$ is non-unimodular.
\end{lemma}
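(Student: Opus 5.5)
Since $G$ has no common fixed point in $V$, its translation part is non-trivial, and the non-unimodularity will come from the ratio character $c$ acting on that translation part. Concretely, I will exhibit a non-trivial continuous homomorphism $G\to\R_{>0}$ coinciding with the modular function. The natural candidate is $\Delta_G(g)=c(g)^{\pm k}$, with $k=\dim_{\KK_0}W$ for a suitable subspace $W$ (possibly with a power of $q$ absorbed in the non-archimedean normalization). It suffices to show that this formula is correct and non-trivial on $G$.

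\textbf{Step 1: the translation subgroup.} Set $N:=G\cap V$, the closed normal subgroup of pure translations, and $G_0:=\lin(G)$, the linear parts. If $N$ is non-trivial, conjugation by $g=(b,T)$ acts on $N$ by $n\mapsto Tn$. This multiplies Haar measure on $N$ by a power of $c(T)$, since $T$ scales every $\KK_0$-direction by exactly $c(T)$ on the closed subgroup $N$ (a lattice-times-subspace in the archimedean case, and a compact-open-times-subspace structure in the non-archimedean case). Because $N$ is normal and closed, the modular function of $G$ restricted to elements acting on $N$ picks up this factor, computed via the usual formula $\Delta_G=\Delta_{G/N}\cdot(\text{module of conjugation on }N)$. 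Since $c(G)\neq\{1\}$, we get $\Delta_G\not\equiv1$ \emph{provided} the factor on $N$ is a genuine power $c(T)^{\delta}$ with $\delta>0$, e.g.\ when $N$ contains a non-compact subgroup or an open subgroup of a subspace. The cases where $N$ is discrete or compact will need separate treatment.

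\textbf{Step 2: the case where $N$ is small (the main obstacle).} If $N$ is trivial or discrete, $G$ maps injectively (or with discrete kernel) to $\Sim_{\lin}(V)$, which is nearly abelian modulo compact isometries. I would then argue that $G$ is essentially abelian-by-compact and derive a common fixed point, contradicting the hypothesis. Take $g_0\in G$ with $c(g_0)\neq1$; it has a unique fixed point $x_0$. For any $h\in G$, the commutator $[g_0,h]$ is a translation, hence lies in $N$. Conjugating by powers of $g_0^{\pm1}$ (choosing the contracting direction) sends $[g_0^n,h]$ to $0$. Since $N$ is discrete, these commutators vanish for large $n$, so $h$ commutes with $g_0^n$ and therefore fixes $x_0$. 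This gives a common fixed point, a contradiction. So $N$ is non-discrete, and its identity component or its open part is a positive-dimensional $\KK_0$-subspace $W$, or in the ultrametric case a closed non-discrete subgroup on which $c$ acts by a genuine module.

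\textbf{Step 3: conclusion.} Conjugation by $g$ acts on the Haar measure of this non-discrete part with module $c(g)^{\dim W}$ in the archimedean case, or $c(g)^{\dim W}$ after the standard normalization of $|\cdot|$ in the ultrametric case. The group $G/N$ has modular function trivial on the relevant elements, or at least independent of $c$. Hence $\Delta_G(g_0)\neq1$ for $g_0$ with $c(g_0)\neq1$, so $G$ is non-unimodular. The delicate points are the exact modular-function formula for closed normal subgroups and the handling of non-discrete but non-linear $N$ in $p$-adic fields, where I would use that $N$ is locally a $\Z_p$-module, hence locally a $\KK_0$-subspace-like object scaled by $T$.
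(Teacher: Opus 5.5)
Your route, computing $\Delta_G$ through the closed normal subgroup $N=G\cap V$ of translations, differs from the paper's and can be completed. As written, however, Step~2 rests on a false claim and Step~3 on an unproved one.

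\textbf{Step 2.} You assert that $[g_0,h]$ is a translation. This needs the linear parts to commute, and $\Sim_{\lin}(V)$ is in general non-abelian: it contains $O(V)$ when $\KK_0=\R$ and $\dim V\ge 2$, and it contains $\GL_2(\Z_p)$ for $\Q_p^2$ with the sup-norm. In general $[g_0,h]$ only lies in $\ker c$, so it has the form $(b',k)$ with $k\in\Isom(V)$. Its conjugates $g_0^n[g_0,h]g_0^{-n}$ have linear part $T_0^nkT_0^{-n}$, which wanders in the compact group $\Isom(V)$ without tending to $\Id$, so discreteness of $N$ gives nothing. (Even in the commutative case it is $g_0^n[g_0,h]g_0^{-n}$, not $[g_0^n,h]$, that tends to the identity.) Your argument also never uses that $G$ is closed, which is the essential input.

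The missing step is as follows. Conjugate so that $g_0=(0,T_0)$ with $c(T_0)<1$. Some power $T_0^{e}$ is a scalar times an isometry $k$: take $e=1$ over $\R$, and $e=d$ in the non-archimedean case, where $c(T_0)^d=|\det T_0|\in q^{\Z}$ gives $T_0^d=\pi^m k$ with $\pi$ a uniformizer of $\KK_0$. Choose $n_j\to\infty$ with $k^{n_j}\to\Id$. Then $g_0^{en_j}hg_0^{-en_j}\to(0,T)$ for every $h=(b,T)\in G$. Closedness gives $(0,T)\in G$, hence $(b,\Id)=h\,(0,T)^{-1}\in N$. Thus $G=N\rtimes G_0$ with $G_0=\mathrm{Stab}_G(0)$. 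Moreover $N\neq\{0\}$ by the no-fixed-point hypothesis, and $T_0N=N$, so $N$ is non-discrete.

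\textbf{Step 3.} The formula $\Delta_G(g)=\Delta_{G/N}(gN)\,\mathrm{mod}_N(g)^{\pm1}$ requires the exact value of $\Delta_{G/N}(g_0N)$. Your claim ``trivial, or at least independent of $c$'' is not justified, and independence of $c$ would not rule out cancellation anyway. Without further argument, $G/N$ only injects continuously into $\Sim_{\lin}(V)$, possibly with non-closed image, so unimodularity of $\Sim_{\lin}(V)$ does not transfer. The splitting above fixes this. It gives $G/N\cong G_0$ topologically, and $G_0$ is a closed subgroup of $\Sim_{\lin}(V)$ with compact normal subgroup $G_0\cap\Isom(V)$ and abelian quotient, hence unimodular. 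Therefore $\Delta_G(g_0)=\mathrm{mod}_N(T_0)^{\pm1}$.

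Your claimed value $c(g)^{\dim W}$ and the ``locally a $\KK_0$-subspace'' heuristic are wrong in positive characteristic. In $V=\mathbf{F}_q((T))$, the series involving only even powers of $T$ form a closed subgroup $N$. The group $\{x\mapsto T^{2k}x+b:\ k\in\Z,\ b\in N\}$ satisfies the hypotheses, yet multiplication by $T^2$ has module $q^{-1}$ on $N$, while its ratio is $q^{-2}$. What you actually need is only $\mathrm{mod}_N(T_0)\neq 1$, and this is immediate. For $U=N\cap\mathbf{B}(0,r)$ one has $T_0U=N\cap\mathbf{B}(0,c(T_0)r)\subseteq U$. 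If the two had equal Haar measure, then $U\setminus\{0\}=\bigcup_{n\ge0}T_0^n(U\setminus T_0U)$ would be null, forcing $N$ to be discrete.

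For comparison, the paper needs none of this structure theory. Assuming unimodularity, conjugation by $g^n$ preserves Haar measure while pushing every compact set into any neighbourhood of the stabilizer $A_0$ of the fixed point of $g$. This forces $A_0$ to have positive measure, hence to be open by Steinhaus. That contradicts $g^{n_k}hg^{-n_k}\to A_0$ for an $h$ with $b(h)\neq 0$.
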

\begin{proof}
Pick $g\in G$ with $c(g)<1$. Up to conjugating $G$ inside $\ASim(V)$, we may assume that the translation part of $g$ is zero. Let
$A_0$ be the subgroup of $G$ consisting of transformations that fix the origin  and note that $g\in A_0$. Note that both the assumption and the conclusion are preserved under passing from $G$ to this conjugate of $G$.  Arguing by contradiction, assume that   $G$ unimodular  and denote by $m$ its Haar measure. 

Let $A\subset G$ be compact.  For any $h=(b,T)\in A$,
\[
g^n h g^{-n} = (T_g^n b,\; T_g^n T T_g^{-n}).
\]
Since $c(g)^n\to0$, the translation part  $T_g^n b$ of $g^n h g^{-n}$ tends  to zero uniformly over the compact set $A$. In addition, since $c(T_g^nTT_g^{-n})=c(T)$,
the linear part of $g^n h g^{-n}$ stays in a compact subset of $\Sim_{\lin}(V)$.  Hence for every open neighborhood $U$ of $A_0$, there exists $N$ such that for every $n\geq N$, $g^n A g^{-n}\subset U$. Hence 
$m(g^n A g^{-n})\leq m(U)$. 
But by unimodularity of $G$, $m(g^n A g^{-n})=m(A)$. Hence $m(A)\leq m(U)$ for every open neighborhood $U$ of $A_0$ so that, by outer regularity of $m$, $m(A)\leq m(A_0)$. 

If $m(A_0)>0$, Steinhaus  theorem implies that the group  $A_0$ is open. By~(ii), pick $h\in G$ with
$b(h)\neq0$. Then $g^{n_k}hg^{-n_k}\to\gamma\in A_0$. Since $A_0$ is open in $G$, for large enough $k$, we have
$g^{n_k}hg^{-n_k}\in A_0$, i.e.\ $h\in g^{-n_k}A_0g^{n_k}=A_0$,
contradicting $b(h)\neq0$. So $m(A_0)=0$, hence  $m(A)=0$ for any compact subset $A\subset G$ so $m\equiv0$, absurd.
\end{proof}

\begin{corollary}\label{cor.transient}
Under assumptions \textit{(ii)} and \textit{(iv)}, the closed group
$G_\mu:=\overline{\langle\supp\mu\rangle}$ is non-unimodular, and the
$\mu$-random walk on $G_\mu$ is topologically transient: for every compact
$K\subset G_\mu$, a.s.\ $L_n\notin K$ for all large $n$.
\end{corollary}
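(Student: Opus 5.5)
The plan has two parts. First I show that $G_\mu$ is non-unimodular using Lemma~\ref{lemma.non-unimodular}. Then I invoke the general fact that adapted random walks on non-unimodular locally compact groups are topologically transient.

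\textbf{Non-unimodularity.} I check the two hypotheses of Lemma~\ref{lemma.non-unimodular} for $G=G_\mu$, which is a closed subgroup of $\ASim(V)$ by construction.
\begin{itemize}
\item Assumption (iv) gives some $g\in\supp\mu$ with $c(g)\neq 1$. Since $c$ is a homomorphism, $c(G_\mu)\neq\{1\}$.
\item Assumption (ii) gives the absence of a common fixed point. A point fixed by all of $G_\mu$ would be fixed by every element of $\supp\mu$, which (ii) excludes. Conversely, a point fixed by $\supp\mu$ is fixed by the generated group and, by continuity of the action, by its closure. So the two formulations agree.
\end{itemize}
Lemma~\ref{lemma.non-unimodular} then yields that $G_\mu$ is non-unimodular.

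\textbf{Topological transience.} I would invoke the classical theorem (Guivarc'h, Keane and Roynette; see also Derriennic–Guivarc'h) that a random walk on a locally compact second countable group is topologically recurrent only if the group is unimodular. Here the walk is adapted, in the sense that $\supp\mu$ generates a dense subgroup of $G_\mu$. For completeness I would sketch the standard argument.

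Let $m$ be a left Haar measure on $G_\mu$ and $\Delta$ its modular function. The random walk operator $P f(h)=\int f(gh)\,d\mu(g)$ acts on the group, and right multiplication intertwines left and right Haar measures. If the walk were topologically recurrent, the chain would be Harris recurrent with respect to Haar measure. Its invariant Radon measure is then unique up to scalar.

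Both $m$ and the twisted measure $\Delta\, m$ are invariant for the appropriate (left or right) walk. Running the argument with the right walk $X_1\cdots X_n$, whose topological behaviour coincides in distribution with that of $L_n$, uniqueness forces $\Delta$ to be constant on $G_\mu$. This means $G_\mu$ is unimodular, a contradiction.

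Finally I use the standard recurrence/transience dichotomy for adapted random walks on groups: either every relatively compact open set is visited infinitely often a.s., or each compact set is visited finitely often a.s. Excluding recurrence therefore gives $L_n\notin K$ eventually, almost surely, for each compact $K$.

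\textbf{Main obstacle.} The main difficulty is the unimodularity criterion itself, specifically the uniqueness of the invariant measure under recurrence. If quoting it is not acceptable, I would argue directly instead. I would take a relatively compact neighbourhood $W$ and compute the expected number of visits $\sum_n \mu^{*n}(W h)$. Using the modular function, one then shows that recurrence would force $\int \log\Delta\, d\mu = 0$ together with a contradiction to the invariance of $m$ under conjugation.
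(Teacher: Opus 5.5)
Your proposal is correct and follows the paper's proof. You get non-unimodularity from Lemma~\ref{lemma.non-unimodular}, with (iv) giving $c(G_\mu)\neq\{1\}$ and (ii) excluding a common fixed point, and then topological transience from the Guivarc'h--Keane--Roynette theorem that adapted walks on non-unimodular groups are transient. However, the optional sketch you give of that theorem is not accurate as written, so rely on the citation as the paper does: a finitely supported $\mu$ on a non-discrete $G_\mu$, as in the paper's application, is never Harris recurrent with respect to Haar measure, and uniqueness for the left and right walks separately does not force $\Delta$ to be constant unless you produce one Radon measure invariant under both left and right convolution by $\mu$ (e.g.\ a vague limit of normalized potentials $\sum_{k\le n}\mu^{*k}$).
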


\begin{proof}
Non-unimodularity follows from Lemma~\ref{lemma.non-unimodular} applied to
$G=G_\mu$: assumption~(iv) gives $c(G_\mu)\neq\{1\}$ and~(ii) gives no
common fixed point. Topological transience then follows from the theorem of
Guivarc'h--Keane--Roynette~\cite[Th\'{e}or\`{e}me 51]{GKR},
see also Guivarc'h--Raja \cite[Proposition 2]{guivarch-raja}.
\end{proof}
\subsection*{Step 2. Escape from a cusp, for a.e.\ starting point}

Define
\[
C:=\{(b,T)\in\ASim(V):c(T)\geq1,\;\|b\|\leq c(T)\}.
\]
This is an enlarged cusp: the natural cusp $\{c(T)>1,\,\|b\|\leq1\}$ is
contained in $C$, but the coupled condition $\|b\|\leq c(T)$ makes $C$
stable under right multiplication by elements with $c(T)>1$, which is
the key to upgrading from $m$-a.e.\ to every starting point in Step~3.

The strategy is to show that $L_ng$ eventually leaves $C$ for $m$-a.e.\
$g$, using total dissipativity of the Markov operator.
Although $L_n$ is supported on  $G_\mu$ it acts by left multiplication on 
$\ASim(V)$, preserving $m$. Moreover topological transience on $G_\mu$ implies that on $\ASim$. 
By Brofferio~\cite[Lemma~1]{brofferio-how}, topological transience of the
random walk on $G_\mu$ (Corollary~\ref{cor.transient}) implies that
$P_\mu$ is \textit{totally dissipative} on $L^1(\ASim(V),m)$: for every
$f\in L^1_+(\ASim(V),m)$ and $m$-a.e.\ $g$,
$\sum_{n=0}^\infty P_\mu^n f(g)<\infty$.
We apply this to the entrance function
$\phi(g):=\mathbf{1}_{C^c}(g)\,\p(X_1g\in C)$,
whose potential counts the expected number of crossings from $C^c$ into $C$.
The key point of the proof to to show that is $\phi\in L^1$. We will start by two preliminary lemmas.
\begin{lemma}[Haar measure on $\ASim(V)$]\label{lemma.haar}
The group $K:=\ker c=\Isom(V)$ is compact, and the sequence
\[
1\longrightarrow K\longrightarrow\Sim_{\lin}(V)\xrightarrow{\;c\;}\Lambda\longrightarrow1
\]
is exact, where $\Lambda:=c(\Sim_{\lin}(V))$. For any measurable section
$s:\Lambda\to\Sim_{\lin}(V)$, the map $(r,k)\mapsto s(r)k$ is a measurable
bijection $\Lambda\times K\to\Sim_{\lin}(V)$, and $dm_{\lin}$ disintegrates as
\[
dm_{\lin}(T)=dk\,d\nu(r),\qquad r=c(T),\quad k=s(r)^{-1}T,
\]
independently of the choice of $s$. Consequently,
\[
dm(b,T)=c(T)^{-d}\,db\,dk\,d\nu(r)
\]
is a left Haar measure on $\ASim(V)$, where $dk$ is the Haar probability measure
on $K$, and $d\nu$ is a Haar measure on the abelian group $\Lambda$.

\end{lemma}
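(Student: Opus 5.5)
We prove the lemma in three steps: first the structure of the similarity group, then its Haar measure via the disintegration, and finally the Haar measure of the semidirect product.

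\emph{Compactness and exactness.} Here $\KK_0\in\{\R,\Q_p,\mathbf{F}_q((T))\}$ and $V$ is finite-dimensional over $\KK_0$.
\begin{itemize}
\item The kernel $K=\ker c=\Isom(V)$ is closed in $\GL(V)$.
\item Every $T\in K$ satisfies $\|T\|_{\mathrm{op}}=1$ and $\|T^{-1}\|_{\mathrm{op}}=1$, so $K$ is bounded in $\operatorname{End}(V)$.
\item Inverses are also bounded, so $K$ is closed in $\operatorname{End}(V)$ and not merely in $\GL(V)$. Hence $K$ is compact.
\item The map $c:\Sim_{\lin}(V)\to\R_{>0}$ is a continuous homomorphism, since $c(TT')=c(T)c(T')$. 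Exactness at each spot is then tautological, with $\Lambda$ its image.
\end{itemize}

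\emph{The subgroup $\Lambda$ and a section.} $\Lambda$ is a subgroup of $\R_{>0}$ and is closed, because $\Sim_{\lin}(V)$ is closed and $K$ is compact, so $c$ is proper.
\begin{itemize}
\item In the archimedean case, $\Lambda=\R_{>0}$ because scalar dilations are similarities.
\item In the non-archimedean case, $\Lambda$ is a closed subgroup containing $|\KK_0^\times|$. It is therefore discrete, of the form $\rho^{\Z}$ for some $\rho$ with $\rho^{m}=q^{-1}$.
\end{itemize}
In both cases a measurable (even continuous, or trivial) section $s$ exists. The map $(r,k)\mapsto s(r)k$ is a bijection with measurable inverse $T\mapsto (c(T),s(c(T))^{-1}T)$.

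\emph{Haar measure on $\Sim_{\lin}(V)$.} Push $dk\,d\nu$ forward to $\Sim_{\lin}(V)$ and check left invariance under $T_0$, writing $r_0=c(T_0)$. We have
\[
T_0s(r)k=s(r_0r)\,\bigl[s(r_0r)^{-1}T_0s(r)\bigr]k,
\]
and the bracket $k_0(r)$ lies in $K$.
\begin{itemize}
\item Translating $k\mapsto k_0(r)k$ preserves $dk$ for each fixed $r$.
\item Translating $r\mapsto r_0r$ preserves $d\nu$.
\end{itemize}
By Fubini the measure is left invariant. Independence of $s$ follows the same way: replacing $s$ by $s'$ changes $k$ by left multiplication by $s'(r)^{-1}s(r)\in K$. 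Alternatively, it follows from uniqueness of Haar measure together with the normalization of $dk$.

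\emph{Haar measure on $\ASim(V)$.} This is the standard semidirect product formula. For $(b_0,T_0)$, left multiplication sends $(b,T)\mapsto(b_0+T_0b,\,T_0T)$.
\begin{itemize}
\item The Lebesgue/Haar measure $db$ on $V$ satisfies $d(T_0b)=|\det T_0|\,db$.
\item Since $T_0$ is a similarity of ratio $c(T_0)$, it maps the unit ball onto a ball of radius $c(T_0)$. Hence $|\det T_0|=c(T_0)^d$, with $|\cdot|$ the appropriately normalized modulus over $\KK_0$.
\item The density $c(T)^{-d}$ becomes $c(T_0)^{-d}c(T)^{-d}$, exactly cancelling the Jacobian $c(T_0)^d$.
\item The factor $dm_{\lin}$ is left invariant by the previous step, and the translation by $b_0$ preserves $db$.
\end{itemize}

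\emph{Main obstacle.} The only point needing care is the modulus computation in the non-archimedean case, namely that the Haar measure of $T_0(B)$ equals $c(T_0)^d\,\mathrm{vol}(B)$ for an ultrametric ball $B$. To handle it, we note that $T_0$ maps $B(0,R)$ onto $B(0,c(T_0)R)$, since $T_0$ is bijective and scales norms. The modulus of an automorphism of $V$ is the volume ratio, so we only need that $\mathrm{vol}\,B(0,\rho R)/\mathrm{vol}\,B(0,R)=\rho^d$ for $\rho\in\Lambda$. This follows from multiplicativity of the modulus: $T_0^{m}$ has ratio in $|\KK_0^\times|$ and is comparable to a scalar, whose modulus is $|\lambda|^d$.
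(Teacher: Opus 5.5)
Your outline follows the paper's proof essentially step by step: compactness of $K$ as a bounded closed set, the element $s(r_0r)^{-1}T_0s(r)\in K$ giving left invariance of $dk\,d\nu$, independence of $s$ via left invariance of $dk$, and the semidirect product formula. However, the step you yourself single out as the main obstacle is not established.

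In the non-archimedean case you argue that $\Lambda$, being a closed subgroup of $\R_{>0}$ containing $|\KK_0^\times|=q^{\Z}$, is ``therefore discrete''. This does not follow: $\R_{>0}$ is itself such a subgroup, and nothing you wrote excludes $\Lambda=\R_{>0}$. Your modulus computation depends on exactly this point. The claim that some power $T_0^m$ has ratio in $|\KK_0^\times|$ (so that $T_0^m\in\lambda K$ and $\mathrm{mod}(T_0)^m=|\lambda|^d$) is precisely the statement that $\Lambda/q^{\Z}$ is torsion. As written, $|\det T_0|_{\KK_0}=c(T_0)^d$ is therefore unproved over $\Q_p$ and $\mathbf{F}_q((T))$, and with it the left invariance of $c(T)^{-d}\,db\,dm_{\lin}$.

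The paper avoids this with a spectral argument that works for every $\KK_0$ at once. All eigenvalues of a similarity $T$ have absolute value $c(T)$: apply $\max_i|\lambda_i|=\lim_n\|T^n\|^{1/n}=c(T)$ to $T$ and to $T^{-1}$. Hence $|\det T|_{\KK_0}=c(T)^d$ directly, and discreteness $\Lambda\subset q^{\frac1d\Z}$ becomes a consequence rather than an input.

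If you prefer to keep your route, replace discreteness by continuity. $\mathrm{mod}$ is a continuous character of $\Sim_{\lin}(V)$ and is trivial on the compact group $K$. So it equals $\psi\circ c$ with $\psi$ continuous on $\Lambda$; you showed $c$ is proper, hence a quotient map onto $\Lambda$. Since $\psi(t)=t^d$ on $q^{\Z}$, this forces $\psi(t)=t^d$ on any closed subgroup $\Lambda\supseteq q^{\Z}$, discrete or not.

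Discreteness can also be proved directly. There are finitely many lattices between $\pi B$ and the unit ball $B$, so the value set of the ultrametric norm is a finite union of cosets of $q^{\Z}$. Since $c(T)$ must rescale this set to itself, $c(T)$ lies in a finite union of such cosets.
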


\begin{proof}
$K$ is compact since isometries form a bounded closed subgroup of $\GL(V)$
over a local field. Since all eigenvalues of $T\in\Sim_{\lin}(V)$ have
modulus $c(T)$, we have $|\det T|_{K_0}=c(T)^d$, hence $T_*db=c(T)^d\,db$.
Left invariance of $c(T)^{-d}\,db\,dm_{\lin}$ follows by the semidirect
product formula. Left invariance of $dk\,d\nu(r)$: left multiplication by
$T_0=s(r_0)k_0$ maps $(r,k)\mapsto(r_0r,k')$ for some $k'\in K$, so
invariance follows from left invariance of $d\nu$ on $\Lambda$ and of $dk$
on $K$. Independence of $s$ follows from left invariance of $dk$.
\end{proof}
Note that  when $\KK_0=\R$, $\Lambda=\R_+^*$ and
$d\nu(r)=dr/r$. When $\KK_0$ is non-archimedean with $q$ the cardinality of the residue field, since $c(T)^d=|\det T|_{\KK_0}\in q^\Z$,
$\Lambda\subset q^{\frac{1}{d}\Z}$ is discrete and $d\nu$ is counting
measure on $\Lambda$.
\begin{lemma}[Moon lemma]\label{lemma.moon}
There exists $C_0>0$ such that for every $B\in V$ and $t>0$,
\[
\mathrm{Vol}\bigl(\{u\in V:\|u\|>t,\;\|u+B\|\leq t\}\bigr)
\leq C_0\min(t^d,\,\|B\|\,t^{d-1}).
\]
\end{lemma}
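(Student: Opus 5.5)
We prove the two bounds separately. Write $M_{t,B}:=\{u\in V:\|u\|>t,\ \|u+B\|\leq t\}$. Throughout, $V$ carries the norm fixed in the reduction: Euclidean when $\KK_0=\R$, ultrametric otherwise. $\mathrm{Vol}$ is a Haar measure on $(V,+)$, normalized so that $\mathrm{Vol}(B(0,t))\leq C_1 t^d$ for all $t>0$.

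\emph{Trivial bound.} The inclusion $M_{t,B}\subset B(-B,t)$ and translation invariance give
\[
\mathrm{Vol}(M_{t,B})\leq \mathrm{Vol}(B(0,t))\leq C_1t^d.
\]
For the volume estimate on balls: in the archimedean case it is the Euclidean formula. In the non-archimedean case, $\mathrm{Vol}(B(0,t))$ is finite and is a step function of $t$. Scaling by a uniformizer $\varpi$ multiplies radii by $|\varpi|$ and volumes by $|\varpi|^d$. Since $t$ ranges over a fundamental interval $[|\varpi|,1]$ up to such scalings, the bound $\mathrm{Vol}(B(0,t))\leq C_1t^d$ follows.

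\emph{Non-archimedean case.} Here $M_{t,B}$ is empty. If $\|u+B\|\leq t<\|u\|$, then $\|B\|=\|u\|>t$ by the strict triangle equality. In that case $\|u+B\|\leq t<\|B\|$, and $\|u\|>t$ means $u$ lies in the sphere $\{\|u\|=\|B\|\}$. So $M_{t,B}$ is nonempty only if $\|B\|>t$, and then
\[
\mathrm{Vol}(M_{t,B})\leq C_1t^d\leq C_1\|B\|t^{d-1}.
\]
Hence $\min$ is attained by the first term, and there is nothing more to prove. The same reasoning disposes of the regime $\|B\|\geq t$ in every case, since then $t^d\leq \|B\|t^{d-1}$.

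\emph{Archimedean case, $\|B\|<t$.} This is the only real content: we must show $\mathrm{Vol}(M_{t,B})\lesssim \|B\|t^{d-1}$, a ``moon'' between two Euclidean balls of radius $t$ whose centers are at distance $\|B\|$. By scaling $u\mapsto tu$, which multiplies volume by $t^d$ and sends $B\mapsto B/t$, we reduce to $t=1$ and $s:=\|B\|<1$. We then use polar coordinates $u=r\omega$ with $\omega\in S^{d-1}$. Fix $\omega$. The condition $\|u+B\|\leq1$ forces $r\leq 1+s$, and the condition $\|u\|>1$ forces $r>1$. So for each $\omega$ the radial slice lies in $(1,1+s]$, of length at most $s$, with density $r^{d-1}\leq 2^{d-1}$. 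Integrating over the sphere gives
\[
\mathrm{Vol}(M_{1,B})\leq 2^{d-1}\sigma(S^{d-1})\,s.
\]
Rescaling yields $C_0\|B\|t^{d-1}$. Alternatively one can note $M_{1,B}\subset\{1<\|u\|\leq 1+s\}$, whose volume is $\omega_d((1+s)^d-1)\leq \omega_d d2^{d-1}s$.

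\emph{Conclusion.} Taking $C_0$ to be the maximum of the constants above gives the lemma. The only step needing care is choosing a Haar normalization compatible with scaling in the non-archimedean case, and that is routine.
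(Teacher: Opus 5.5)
Your proof is correct. The non-archimedean half matches the paper's, and the archimedean half takes a genuinely different route. In the non-archimedean case the paper argues that two closed balls of equal radius in an ultrametric space are equal or disjoint, so the set is empty when $\|B\|\le t$ and equals $\mathbf{B}(-B,t)$ otherwise; your strict-triangle-equality argument shows the same thing. Your opening sentence ``Here $M_{t,B}$ is empty'' should be qualified with ``when $\|B\|\le t$''.

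\textbf{Paper's route (archimedean).} The paper decomposes $V=\R u_0\oplus u_0^\perp$ with $u_0=B/\|B\|$ and applies Fubini. Each slice above $z\in u_0^\perp$ is the difference of two intervals of length $2a(z)$ offset by $\|B\|$, so it has length at most $\min(2a(z),\|B\|)$. This single computation gives both bounds $t^d$ and $\|B\|t^{d-1}$ for every value of $\|B\|/t$, but it relies on the orthogonal (Euclidean) structure.

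\textbf{Your route.} You split by regime.
\begin{itemize}
\item For $\|B\|\ge t$ the minimum is $t^d$, and the inclusion $M_{t,B}\subset B(-B,t)$ suffices. This works uniformly over all local fields.
\item For $\|B\|<t$, the triangle inequality puts $M_{t,B}$ inside the shell $\{t<\|u\|\le t+\|B\|\}$. Its volume is $\omega_d\bigl((t+\|B\|)^d-t^d\bigr)\le d\,2^{d-1}\omega_d\|B\|t^{d-1}$.
\end{itemize}

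\textbf{What each route buys.} The shell bound alone would be far too weak when $\|B\|\gg t$, which is why you need the case split. In exchange you avoid any slice computation. In its shell-volume form your argument uses only homogeneity of the norm, so it works for any norm on $\R^d$, not just a Euclidean one. You also supply the uniformizer-scaling argument for $\mathrm{Vol}(B(0,t))\le C_1t^d$ in the non-archimedean case, which the paper uses without justification.
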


\begin{proof}
Set
\[ S:= \{u\in V:\|u\|>t,\;\|u+B\|\leq t\}. \] 
In order to estimate the volume of $S$, we will distinguish two cases. 
\medskip

\noindent\textit{Case 1:  $\KK_0$ non-archimedean}.

Observe that $S=\mathbf{B}(-B,t)\setminus \mathbf{B}(0,t)$, where $\mathbf{B}(a,r)$ denotes the closed ball of center $a\in V$ and radius $r$. Now in a ultrametric metric space, two balls of the same radius are either disjoint or equal. Hence  the two balls are equal when 
  $\|B\|\leq t$ (since then $-B\in \mathbf{B}(0,t)$) and disjoint otherwise. In other terms, $S=\emptyset$ if $\|B\|\leq t$ or $S=\mathbf{B}(-B,t)$ if $\|B\|>t$. In the first case, $\textrm{Vol}(S)=0$ and the desired inequality is trivial. If $\|B\|>t$, 
  $\textrm{Vol}(S)=\textrm{Vol}(\mathbf{B}(-B,t))\leq C_0 t^d=C_0 t \,t^{d-1}\leq C_0 \|B\| t^{d-1}$. 

\medskip

\noindent\textit{Case 2: $\KK_0=\R$.}
Decompose $V=\R u_0\oplus u_0^\perp$ with $u_0=B/\|B\|$. Writing
$u=su_0+z$ with $s\in\R$ and $z\in u_0^\perp$, we have 
$\|u\|^2=s^2+\|z\|^2$ and $\|u+B\|^2=(s+\|B\|)^2+\|z\|^2$.
The condition $\|u+B\|\leq t$ forces $\|z\|\leq t$. For fixed such $z$,
set $a(z):=\sqrt{t^2-\|z\|^2}$; then
\[
\|u\|>t\iff |s|>a(z),\qquad
\|u+B\|\leq t\iff |s+\|B\||\leq a(z).
\]
Hence the slice of $S$ above $z$ is
\[
I_z=\{s\in\R:|s|>a(z),\;|s+\|B\||\leq a(z)\}
=[-\|B\|-a(z),\,-\|B\|+a(z)]\setminus[-a(z),\,a(z)],
\]
the set difference of two intervals of the same length $2a(z)$, one being
the translate of the other by $-\|B\|$. When $\|B\|\geq 2a(z)$ they
do not intersect and $\ell(I_z)=2a(z)\leq\|B\|$; when $\|B\|<2a(z)$
they intersect and $\ell(I_z)=\|B\|$. Hence $\ell(I_z)\leq\min(2a(z),\|B\|)$.
By Fubini,
\[
\mathrm{Vol}(S)
\leq\int_{\|z\|\leq t}\ell(I_z)\,dz
\leq\int_{\|z\|\leq t}\min(2a(z),\|B\|)\,dz.
\]
Since $\min(2a(z),\|B\|)\leq 2a(z)\leq 2t$, the first bound gives
$\mathrm{Vol}(S)\leq Ct^d$. Since $\min(2a(z),\|B\|)\leq\|B\|$ and
$\mathrm{Vol}(\{z\in u_0^\perp:\|z\|\leq t\})=Ct^{d-1}$, the second
gives $\mathrm{Vol}(S)\leq C\|B\|t^{d-1}$. Hence
$\mathrm{Vol}(S)\leq C\min(t^d,\|B\|t^{d-1})$.
\end{proof}

\begin{lemma}\label{lemma.phi-L1}
The entrance function $\phi$ belongs to $L^1(\ASim(V),m)$.
\end{lemma}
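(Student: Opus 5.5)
The plan is to compute $\int\phi\,dm$ by exchanging the order of integration. By Fubini,
\[
\int\phi\,dm=\E\Bigl[m\bigl(\{g\in C^c:\ X_1g\in C\}\bigr)\Bigr].
\]
It therefore suffices to bound $m(\{g\notin C,\ Xg\in C\})$ for a fixed $X=(\beta,S)\in\supp\mu$ by $C_1\bigl(1+\log^+c(S)+\log^+\|\beta\|\bigr)$, where $C_1$ does not depend on $X$. The moment condition (i) then gives integrability.

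\textbf{Setting up the computation.} Fix $X=(\beta,S)$ and put $s:=c(S)$. For $g=(b,T)$ with $r:=c(T)$ we have $Xg=(\beta+Sb,\,ST)$ and $c(ST)=rs$. Hence
\begin{itemize}
\item $Xg\in C$ if and only if $rs\ge1$ and $\|\beta+Sb\|\le rs$;
\item $g\notin C$ if and only if $r<1$ or $\|b\|>r$.
\end{itemize}
I use the disintegration $dm=r^{-d}\,db\,dk\,d\nu(r)$ of Lemma~\ref{lemma.haar}. The $K$-integral contributes only a factor $1$. In the $b$-integral I substitute $u=Sb$. Since $S$ is a similarity, $\|u\|=s\|b\|$ and $db=s^{-d}du$. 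The conditions become $\|u+\beta\|\le rs$, together with either $r<1$ or $\|u\|>rs$.

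I split the set into two pieces according to whether $r<1$ or $r\ge1$.

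\textbf{Piece 1: $1/s\le r<1$.} This piece is nonempty only if $s>1$. The $u$-set is contained in a ball of radius $rs$, so its volume is at most $C(rs)^d$. The contribution is therefore
\[
\int_{[1/s,1)} r^{-d}s^{-d}\,C(rs)^d\,d\nu(r)=C\,\nu\bigl([1/s,1)\bigr).
\]
This is at most $C'(1+\log^+s)$. In the archimedean case $d\nu=dr/r$. In the non-archimedean case $\nu$ is counting measure on $\Lambda\subset q^{\frac1d\Z}$, which has at most $d\log s/\log q+1$ points in this interval.

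\textbf{Piece 2: $r\ge\max(1,1/s)$.} Here the $u$-set is exactly $\{\|u\|>t,\ \|u+\beta\|\le t\}$ with $t=rs$. The Moon Lemma~\ref{lemma.moon} bounds its volume by $C_0\min(t^d,\|\beta\|t^{d-1})$. After multiplying by $r^{-d}s^{-d}=t^{-d}$, the contribution is
\[
\le C_0\int_{r\ge\max(1,1/s)}\min\bigl(1,\|\beta\|/(rs)\bigr)\,d\nu(r).
\]
I change variables to $t=rs\ge1$, using that $\nu$ is invariant under multiplication by $s$ (in the discrete case $\nu$ becomes counting measure on a coset of $\Lambda$ inside a fixed lattice $q^{\frac1d\Z}$, so the bound is uniform). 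This gives
\[
\le C_0\int_{t\ge1}\min(1,\|\beta\|/t)\,d\nu(t).
\]
The range $1\le t\le\max(1,\|\beta\|)$ contributes $O(1+\log^+\|\beta\|)$. The tail $\int_{t>\|\beta\|}\|\beta\|/t\,d\nu(t)$ is bounded by an absolute constant: it equals $\int_1^\infty\tau^{-2}d\tau$ in the archimedean case and is a geometric series in the discrete case.

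\textbf{Conclusion and main obstacle.} Adding the two pieces gives the required bound $C_1(1+\log^+c(S)+\log^+\|\beta\|)$. Taking expectation and using (i) shows $\phi\in L^1(\ASim(V),m)$. The main obstacle is Piece~2. There $g$ already has large scale ($r\ge1$), and only the boundary effect encoded by the Moon Lemma keeps the measure finite. The crucial point is that the bound $\|\beta\|t^{d-1}$ exactly cancels the Haar weight $t^{-d}$, leaving $\|\beta\|/t$, which is $d\nu$-integrable at infinity.
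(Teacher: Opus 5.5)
Your proposal is correct and follows essentially the same route as the paper. It uses the same splitting of $C^c$ into $\{c(T)<1\}$ and $\{c(T)\ge 1,\ \|b\|>c(T)\}$, the cancellation of $r^{-d}$ against the ball volume on the first piece (leaving $\nu([c(S)^{-1},1))\lesssim 1+\log^+c(S)$), and the Moon lemma followed by the substitution $t=rs$ on the second piece. Your coset remark is unnecessary: since $s=c(S)\in\Lambda$, one has $s\Lambda=\Lambda$ exactly, which is how the paper justifies that change of variables.
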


 \begin{proof}
Decompose $C^c=D_1\sqcup D_2$ with $D_1=\{c(T)<1\}$ and
$D_2=\{c(T)\geq1,\,\|b\|>c(T)\}$, and write $X_1=(B_1,A_1)$.
By Lemma~\ref{lemma.haar} and Tonelli's theorem,   since $\phi(g)=\E[\mathbf{1}_{C^c}(g)\mathbf{1}_C(X_1g)]
$, 
\[
\int_{\ASim(V)}\phi(g)\,dm(g)
=\E\!\left[\int_{\ASim(V)}\mathbf{1}_{C^c}(g)\,\mathbf{1}_{C}(X_1g)\,dm(g)\right]
=I_1+I_2,
\]
where $I_j:=\E\!\left[\int_{D_j}\mathbf{1}_C(X_1g)\,dm(g)\right]$
and $dm(b,T)=r^{-d}\,db\,dk\,d\nu(r)$ with $r=c(T)$, $T=s(r)k$.
 
\medskip\noindent\textit{Integral $I_1$ over $D_1$.}
For $g=(b,s(r)k)\in D_1$ with $r<1$, we have $X_1g=(B_1+A_1b,\,A_1s(r)k)$,
so the condition $X_1g\in C$ reads $c(A_1s(r)k)\geq1$ and
$\|B_1+A_1b\|\leq c(A_1s(r)k)$. Since $c$ is a homomorphism and
$k\in K=\ker c$, we have $c(A_1s(r)k)=c(A_1)c(s(r))=c(A_1)r$.
In particular the integrand does not depend on $k$, so $\int_K dk=1$
gives
\[
I_1=\E\!\left[\int_{\{r<1\}}\int_V
\mathbf{1}_{\{c(A_1)r\geq1\}}\,
\mathbf{1}_{\{\|B_1+A_1b\|\leq c(A_1)r\}}\,
r^{-d}\,db\,d\nu(r)\right].
\]
By translation invariance of $db$,
$\int_V\mathbf{1}_{\{\|B_1+A_1b\|\leq c(A_1)r\}}\,db
=\int_V\mathbf{1}_{\{\|A_1b\|\leq c(A_1)r\}}\,db$.
Since $A_1$ is a similarity of ratio $c(A_1)$, the condition
$\|A_1b\|\leq c(A_1)r$ is equivalent to $\|b\|\leq r$, hence
\[
\int_V\mathbf{1}_{\{\|A_1b\|\leq c(A_1)r\}}\,db
=\int_V\mathbf{1}_{\{\|b\|\leq r\}}\,db=r^d\cdot\mathrm{Vol}(B(0,1)).
\]
The factors $r^{-d}$ and $r^d$ cancel, leaving
\[
I_1= C\,\E\bigl[\nu\bigl(\{r\in\Lambda:c(A_1)^{-1}\leq r<1\}\bigr)\bigr].
\]
When $\KK_0=\R$ we have the estimate $\nu(\{r:c(A_1)^{-1}\leq r<1\})=\int_{c(A_1)^{-1}}^1\frac{dr}{r}=\log^+c(A_1)$.

 \noindent When $\KK_0$ is non-archimedean, we have a similar  estimate:
$$\nu(\{r\in\Lambda:c(A_1)^{-1}\leq r<1\})=\#(\Lambda\cap[c(A_1)^{-1},1))
\leq C_p\log^+c(A_1),$$ 
where the last inequality is based on the fact that $\Lambda\subset q^{\frac{1}{d}\Z}$ is a geometric progression.
In both cases $I_1\leq C\,\E[\log^+c(A_1)]<\infty$ by assumption~(i).
 
\medskip\noindent\textit{Integral $I_2$ over $D_2$.}
For $g=(b,s(r)k)\in D_2$ with $r\geq1$ and $\|b\|>r$, the condition
$X_1g\in C$ reads $c(A_1)r\geq1$ and $\|B_1+A_1b\|\leq c(A_1)r$.
As in $I_1$, the integrand does not depend on $k$, so $\int_K dk=1$ and
\[
I_2=\E\!\left[\int_{\{r\geq1\}}\int_{\{\|b\|>r\}}
\mathbf{1}_{\{c(A_1)r\geq1\}}\,
\mathbf{1}_{\{\|B_1+A_1b\|\leq c(A_1)r\}}\,
r^{-d}\,db\,d\nu(r)\right].
\]
Set $R:=c(A_1)$ and $Q:=\|B_1\|$. Change variables $u=A_1b$; since
$A_1$ is a similarity of ratio $R$, $\|u\|=R\|b\|$ and $db=R^{-d}\,du$,
so the domain $\{\|b\|>r,\,\|B_1+A_1b\|\leq Rr\}$ becomes
\[
E_r:=\{u\in V:\|u\|>Rr,\;\|B_1+u\|\leq Rr\}.
\]
Hence
\[
I_2=\E\!\left[\int_{\{r\geq1,\,Rr\geq1\}}
R^{-d}\,r^{-d}\,\mathrm{Vol}(E_r)\,d\nu(r)\right].
\]
By the Moon lemma, $\mathrm{Vol}(E_r)\leq C_0\min((Rr)^d,Q(Rr)^{d-1})$, hence
\[
R^{-d}\,r^{-d}\,\mathrm{Vol}(E_r)\leq C_0\min\!\left(1,\frac{Q}{Rr}\right).
\]
Since $R=c(A_1)\in\Lambda$, left-invariance of $d\nu$ on $\Lambda$ gives
$d\nu(Rr)=d\nu(r)$, so the change of variables $t=Rr$ yields
\[
I_2\leq C\,\E\!\left[\int_{\{t\geq1\}}\min\!\left(1,\frac{Q}{t}\right)
d\nu(t)\right].
\]

\smallskip\noindent
When $\KK_0=\R$, $d\nu(t)=dt/t$ and splitting at $t=Q$:
\[
\int_1^\infty\min\!\left(1,\frac{Q}{t}\right)\frac{dt}{t}
=\int_1^Q\frac{dt}{t}+Q\int_Q^\infty\frac{dt}{t^2}
=\log^+Q+1.
\]

\smallskip\noindent
When $\KK_0$ is non-archimedean, $\Lambda=\{q^{na}:n\in\Z\}$ for some $a>0$,
$d\nu$ is counting measure, and splitting at $t=Q$ gives
\[
\sum_{t\in\Lambda,\,t\geq1}\min\!\left(1,\frac{Q}{t}\right)
=\#\{t\in\Lambda:1\leq t\leq Q\}+Q\sum_{t\in\Lambda,\,t>Q}\frac{1}{t}.
\]
The first term is $\leq C_a\log^+Q$. For the second, the tail of the
geometric series gives
$\sum_{t\in\Lambda,\,t>Q}1/t\leq C_a/Q$,
hence $Q\sum_{t\in\Lambda,\,t>Q}1/t\leq C_a$.
\smallskip\noindent
In both cases $I_2\leq C\,\E[1+\log^+\|B_1\|]<\infty$ by assumption~(i).
\end{proof}

\noindent\textbf{Conclusion of Step~2.}
Total dissipativity applied to $\phi\in L^1$ gives, for $m$-a.e.\ $g$,
\[
\sum_{n=0}^\infty \p(L_{n+1}g\in C,\;L_ng\notin C)<\infty.
\]
By Borel--Cantelli, $L_ng$ crosses from $C^c$ into $C$ only finitely many
times. By assumptions~(iii)--(iv) and~(i), the random walk $(\log c(L_n))$
has zero mean and finite first moment, so by the Chung--Fuchs
theorem~\cite{ChungFuchs1951}, $\liminf_{n}\log c(L_n)=-\infty$ a.s.,
i.e.\ $\liminf_{n}c(L_n)=0$ a.s. Since $c$ is a group homomorphism,
$\liminf_{n}c(L_ng)=0$ a.s., so $L_ng$ cannot remain trapped in $C$.
Hence for $m$-a.e.\ $g$, a.s.\ $L_ng\notin C$ for all large $n$.
\subsection*{Step 3. Upgrade from a.e.\ to every starting point}

\begin{lemma}\label{lemma.cusp-stable}
The set $E:=\{(u,S)\in\ASim(V):c(S)>1,\;\|u\|<c(S)-1\}$ has positive
Haar measure and satisfies $CE\subset C$.
\end{lemma}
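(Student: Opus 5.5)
The lemma has two parts: the set $E$ has positive Haar measure, and $CE\subset C$. Both are direct computations with the explicit description of $m$ from Lemma~\ref{lemma.haar} and the group law $(b,T)(u,S)=(b+Tu,\,TS)$.

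\emph{Positive measure.} Use the disintegration $dm(u,S)=c(S)^{-d}\,du\,dk\,d\nu(r)$ with $r=c(S)$. The condition defining $E$ depends only on $r$ and $u$, so integrating out $k$ gives
\[
m(E)=\int_{\{r>1\}} r^{-d}\,\mathrm{Vol}\bigl(\{u:\|u\|<r-1\}\bigr)\,d\nu(r).
\]
For each $r>1$ the inner volume is positive, since it is the volume of an open ball of positive radius.
\begin{itemize}
\item When $\KK_0=\R$, the set $\{r>1\}$ has positive $dr/r$-measure, so $m(E)>0$.
\item In the non-archimedean case, $\Lambda\cap(1,\infty)\neq\emptyset$: $\Lambda$ is a nontrivial subgroup of $\R_{>0}$, since it contains $c$ of scalar multiplication by a uniformizer inverse, of ratio $>1$. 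The counting measure gives positive mass to any such $r$, so again $m(E)>0$.
\end{itemize}
If one prefers, one can instead note that $E$ is a nonempty open subset of $\ASim(V)$ (it is open because $c$ and the norm are continuous, and nonempty by the above), and Haar measure charges nonempty open sets.

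\emph{Stability $CE\subset C$.} Take $(b,T)\in C$, so $c(T)\ge 1$ and $\|b\|\le c(T)$, and take $(u,S)\in E$, so $c(S)>1$ and $\|u\|<c(S)-1$. Their product is $(b+Tu,\,TS)$.
\begin{itemize}
\item \emph{Linear part.} $c(TS)=c(T)c(S)>c(T)\ge1$.
\item \emph{Translation part.} Using that $T$ is a similarity,
\[
\|b+Tu\|\le \|b\|+c(T)\|u\|\le c(T)+c(T)\bigl(c(S)-1\bigr)=c(T)c(S)=c(TS).
\]
\end{itemize}
Hence $(b+Tu,TS)\in C$. In the ultrametric case the same chain holds, since the ultrametric bound is even stronger than the triangle inequality.

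The only step needing care is the positivity of $m(E)$ in the non-archimedean case. There one must check that $\Lambda$ actually contains ratios strictly greater than $1$, which holds because the scalar similarities $\pi^{-1}\Id$ already lie in $\Sim_{\lin}(V)$. Everything else is a direct computation.
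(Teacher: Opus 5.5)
Your proof is correct and follows the paper's argument. The paper also gets positivity of $m(E)$ from $E$ being a nonempty open set, and it proves $CE\subset C$ by the same chain $\|b+Tu\|\le\|b\|+c(T)\|u\|\le c(T)c(S)=c(TS)$; your disintegration computation and your check of nonemptiness via the scalar similarity $\pi^{-1}\Id$ (ratio $>1$) just make explicit what the paper leaves implicit.
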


\begin{proof}
Set $E$ is open and non-empty, hence has positive Haar measure. For $(b,T)\in C$ and
$(u,S)\in E$: the product is $(b+Tu,TS)$ with
$c(TS)=c(T)c(S)\geq c(S)>1$ and
\[
\|b+Tu\|\leq\|b\|+c(T)\|u\|
\leq c(T)+c(T)\|u\|
= c(T)(1+\|u\|)
< c(T)c(S)=c(TS),
\]
so $(b+Tu,TS)\in C$.
\end{proof}

\begin{corollary}\label{cor.escape-any-g}
For every $g_0\in\ASim(V)$, almost surely $L_ng_0\notin C$ for all large $n$.
\end{corollary}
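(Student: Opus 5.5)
The idea is to use the stability $CE\subset C$ from Lemma~\ref{lemma.cusp-stable} to transfer the $m$-a.e.\ statement of Step~2 to an arbitrary $g_0$. Let $N\subset\ASim(V)$ be the exceptional set of Step~2. It has $m(N)=0$, and for $g\notin N$ we have, almost surely, $L_ng\notin C$ for all large $n$.

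Since $m$ is a left Haar measure and $m(E)>0$, the set $g_0E$ also has positive measure. Indeed $m(g_0E)=m(E)>0$. Hence there is a point $g=g_0e\in g_0E\setminus N$ with $e\in E$.

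Now argue by inclusion. Suppose $L_ng_0\in C$. Then
\[
L_ng=L_ng_0e\in CE\subset C .
\]
Thus $\{L_ng_0\in C\}\subset\{L_ng\in C\}$ for every $n$ and every realization of the walk. Since $g\notin N$, almost surely $L_ng\notin C$ for all large $n$. It follows that, almost surely, $L_ng_0\notin C$ for all large $n$.

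Only two points need care. The first is that $E$ is a genuinely open nonempty subset of $\ASim(V)$, so it has positive Haar measure. The second is that the exceptional set from Step~2 is $m$-null with respect to the same left Haar measure. No probabilistic input is needed beyond Step~2.
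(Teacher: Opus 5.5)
Your proof is correct and is essentially the paper's argument. The paper picks $h\in E\cap g_0^{-1}F$, where $F$ is the full-measure set from Step~2; this is exactly your $e$ with $g=g_0e\notin N$. It then concludes in the same way, via the pathwise implication $L_ng_0\in C\Rightarrow L_ng_0h\in CE\subset C$.
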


\begin{proof}
Let $F\subset\ASim(V)$ be the full Haar measure set from Step~2.
Since $g_0^{-1}F$ has full measure and $E$ has positive measure,
choose $h\in E\cap g_0^{-1}F$. Since   $g_0h\in F$,
by Step~2, a.s.\ $L_n(g_0h)\notin C$ for large $n$. 
If $L_ng_0\in C$ infinitely often then  by Lemma~\ref{lemma.cusp-stable}, $L_ng_0h\in CE\subset C$
infinitely often, a contradiction.
\end{proof}

\subsection*{Conclusion of the proof}
We first bootstrap Corollary \ref{cor.escape-any-g} to a stronger statement that will be needed for the proof. 
For $M>0$, define
\[
C_M:=\{(b,T)\in\ASim(V):c(T)\geq1,\;\|b\|\leq Mc(T)\},
\]
so that $C_1=C$. 

We claim that the statement of Corollary \ref{cor.escape-any-g} continues to hold if we replace $C$ by $C_M$ in the conclusion.  Since conjugation by $(0,\lambda I_d)$ acts as
$(b,T)\mapsto(\lambda b,T)$, choosing $\lambda\in\KK_0$ with
$|\lambda|\leq M^{-1}$ gives
$(0,\lambda I_d)\,C_M\,(0,\lambda I_d)^{-1}\subset C_1$.
The conjugated measure $\mu^{(\lambda)}$ satisfies the same
assumptions~(i)--(iv). For any $g_0\in\ASim(V)$, setting
$g_0^{(\lambda)}:=(0,\lambda I_d)g_0(0,\lambda I_d)^{-1}$, the associated random walk $L_n^{(\lambda)}$ verifies 
$L_n^{(\lambda)}g_0^{(\lambda)}=(0,\lambda I_d)L_ng_0(0,\lambda I_d)^{-1}$. 
Hence, $L_ng_0\in C_M$ implies $L_n^{(\lambda)}g_0^{(\lambda)}\in C_1$.
Applying Corollary~\ref{cor.escape-any-g} to $\mu^{(\lambda)}$ and
$g_0^{(\lambda)}$ we have proven the following. 

\begin{corollary}\label{cor.escape-CM}
For every $M>0$ and every $g_0\in\ASim(V)$, almost surely
$L_ng_0\notin C_M$ for all large $n$.
\end{corollary}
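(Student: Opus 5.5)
The plan is to reduce the statement for $C_M$ to Corollary~\ref{cor.escape-any-g}, which already treats $C=C_1$. The reduction conjugates the whole picture by a scalar dilation, chosen so that $C_M$ is carried inside $C_1$.

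First, fix $M>0$ and pick $\lambda\in\KK_0^\ast$ with $|\lambda|\le M^{-1}$. Such a $\lambda$ exists in each case: take a large power of $p$ when $\KK_0=\Q_p$, of $T$ when $\KK_0=\mathbf F_q((T))$, and $\lambda=M^{-1}$ when $\KK_0=\R$. Set $\delta:=(0,\lambda I_d)\in\ASim(V)$; it is a similarity of ratio $|\lambda|$ with respect to the chosen norm. A one-line computation with the group law gives
\[
\delta(b,T)\delta^{-1}=(\lambda b,\,T).
\]
Consequently $c$ is unchanged and $\|\lambda b\|=|\lambda|\,\|b\|\le M^{-1}\|b\|$. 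Hence if $c(T)\ge 1$ and $\|b\|\le Mc(T)$, then $\|\lambda b\|\le c(T)$, which shows $\delta C_M\delta^{-1}\subset C_1=C$.

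Second, let $\mu^{(\lambda)}$ be the image of $\mu$ under $g\mapsto \delta g\delta^{-1}$, and check that it satisfies (i)--(iv).
\begin{itemize}
\item[(i)] Ratios are unchanged and $\log^+\|\lambda b\|\le\log^+\|b\|+\log^+|\lambda|$, so the moment condition is preserved.
\item[(iii)], (iv) These depend only on the law of $c(g)$, which is unchanged.
\item[(ii)] If $v$ were fixed by every element of $\supp\mu^{(\lambda)}=\delta(\supp\mu)\delta^{-1}$, then $\delta^{-1}v=\lambda^{-1}v$ would be fixed by every element of $\supp\mu$. This contradicts (ii) for $\mu$.
\end{itemize}

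Third, let $X_i^{(\lambda)}:=\delta X_i\delta^{-1}$. These are i.i.d.\ with law $\mu^{(\lambda)}$, so the associated left walk is $L_n^{(\lambda)}=\delta L_n\delta^{-1}$. For $g_0\in\ASim(V)$ put $g_0^{(\lambda)}:=\delta g_0\delta^{-1}$. Then
\[
L_n^{(\lambda)}g_0^{(\lambda)}=\delta L_ng_0\delta^{-1}.
\]
By the first step, $L_ng_0\in C_M$ implies $L_n^{(\lambda)}g_0^{(\lambda)}\in C$. Applying Corollary~\ref{cor.escape-any-g} to $\mu^{(\lambda)}$ and the starting point $g_0^{(\lambda)}$, almost surely $L_n^{(\lambda)}g_0^{(\lambda)}\notin C$ for all large $n$. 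Therefore almost surely $L_ng_0\notin C_M$ for all large $n$, which is the claim.

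There is no real obstacle here. The only points needing care are the verification of hypothesis (ii) for the conjugated measure and the requirement that $\delta$ be a similarity for the norm in use. The latter holds because scalar multiplication by $\lambda\in\KK_0$ scales any norm on $V$ by $|\lambda|$.
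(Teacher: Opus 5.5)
Your proposal is correct and matches the paper's argument. You conjugate by $(0,\lambda I_d)$ with $|\lambda|\le M^{-1}$ to carry $C_M$ into $C_1$, check that the conjugated measure $\mu^{(\lambda)}$ still satisfies (i)--(iv), and apply Corollary~\ref{cor.escape-any-g} to $\mu^{(\lambda)}$ and $g_0^{(\lambda)}$; your explicit check of (i)--(iv), in particular of non-degeneracy via $\delta^{-1}v=\lambda^{-1}v$, supplies what the paper only asserts.
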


Let us come back to the proof of the main claim. 
We show $c(L_n)\,\mathbf{1}_{\{L_nx\in U\}}\to0$ a.s.\ for every compact
$U\subset V$ and every $x\in V$.
Now fix $x\in V$, set $g_x:=(x,\Id)$, so $b(L_ng_x)=L_nx$ and
$c(L_ng_x)=c(L_n)$. Choose $M>0$ with $U\subset\{v:\|v\|\leq M\}$.
By Corollary~\ref{cor.escape-CM}, a.s.\ eventually $L_ng_x\notin C_M$.
If $L_nx\in U$ and $c(L_n)\geq1$, then $c(L_ng_x)\geq1$ and
$\|b(L_ng_x)\|=\|L_nx\|\leq M\leq Mc(L_n)=M \, c(L_ng_x)$,
so $L_ng_x\in C_M$, which is impossible for large $n$. Hence a.s.\ for large $n$,
\[
L_nx\in U\implies c(L_n)<1.
\]
Now let $\varepsilon>0$. The set
\[
K_{\varepsilon,M}:=\{(b,T)\in\ASim(V):\varepsilon\leq c(T)\leq1,\;\|b\|\leq M\}
\]
is compact because both the operator norm of the linear part is bounded away from zero and infinity and the translation part is bounded in norm in a local field).
Since $L_ng_x\in K_{\varepsilon,M}$ implies $L_n\in K_{\varepsilon,M}g_x^{-1}$,
which is compact, topological transience (Corollary~\ref{cor.transient})
gives $L_n\in K_{\varepsilon,M}g_x^{-1}$ only finitely often. Hence
a.s.\ for large $n$,
\[
L_nx\in U\implies c(L_n)<\varepsilon.
\]
Since $\varepsilon>0$ is arbitrary, $c(L_n)\,\mathbf{1}_{\{L_nx\in U\}}\to0$
a.s., and therefore
\[
\|L_nx-L_ny\|\,\mathbf{1}_{\{L_nx\in U\}}
=c(L_n)\|x-y\|\,\mathbf{1}_{\{L_nx\in U\}}\longrightarrow0.\qed
\]
\section*{Acknowledgements}
The authors would like to thank Çağrı Sert for encouraging this collaboration and for several helpful discussions.
They are also grateful to Sara Brofferio for several enlightening discussions concerning affine random walks.


\end{document}